\title{Nearly Optimal Approximation of Matrix Functions by the Lanczos Method}
\author{
Noah Amsel\thanks{New York University (\texttt{noah.amsel@nyu.edu}, \texttt{tyler.chen@nyu.edu}, \texttt{cmusco@nyu.edu})}
\And Tyler Chen\footnotemark[1]
\And Anne Greenbaum\thanks{University of Washington (\texttt{greenbau@uw.edu})}
\And Cameron Musco\thanks{University of Massachusetts Amherst (\texttt{cmusco@cs.umass.edu})}
\And Christopher Musco\footnotemark[1]
}
\date{}
\definecolor{c0}{HTML}{641a80}
\definecolor{c1}{HTML}{b73779}
\newtheorem{theorem}{Theorem}
\newtheorem{fact}[theorem]{Fact}
\newtheorem{lemma}[theorem]{Lemma}
\newtheorem{definition}[theorem]{Definition}
\crefname{fact}{Fact}{Facts}
\crefname{section}{Section}{Sections}
\crefname{subsection}{Section}{Sections}
\newcommand{\R}{\mathbb{R}}
\newcommand{\lan}[2]{\mathsf{lan}_{#1}(#2)}
\newcommand{\lanor}[2]{\mathsf{lanOR}_{#1}(#2)}
\renewcommand{\vec}{\mathbf}
\newcommand{\bv}{\vec}
\newcommand{\T}{\mathsf{T}}
\newcommand{\lmin}{\lambda_{\textup{min}}}
\newcommand{\lmax}{\lambda_{\textup{max}}}
\newcommand{\err}[2]{\mathsf{err}_{#1}(#2)}
\newcommand{\opt}[2]{\mathsf{opt}_{#1}(#2)}
\DeclareMathOperator{\sign}{sign}
\begin{document}

\maketitle

\begin{abstract}
Approximating the action of a matrix function $f(\vec{A})$ on a vector $\vec{b}$ is an increasingly important primitive in machine learning, data science, and statistics, with applications such as sampling high dimensional Gaussians, Gaussian process regression and Bayesian inference, principle component analysis, and approximating Hessian spectral densities.
Over the past decade, a number of algorithms enjoying strong theoretical guarantees have been proposed for this task.
Many of the most successful belong to a family of algorithms called \emph{Krylov subspace methods}.
Remarkably, a classic Krylov subspace method, called the Lanczos method for matrix functions (Lanczos-FA), frequently outperforms newer methods in practice. Our main result is a theoretical justification for this finding: we show that, for a natural class of \emph{rational functions}, Lanczos-FA matches the error of the best possible Krylov subspace method up to a multiplicative approximation factor. 
The approximation factor depends on the degree of $f(x)$'s denominator and the condition number of $\vec{A}$, but not on the number of iterations $k$. 
Our result provides a strong justification for the excellent performance of Lanczos-FA, especially on functions that are well approximated by rationals, such as the matrix square root.
\end{abstract}

\section{Introduction}

Given a symmetric matrix $\vec{A} \in \R^{d\times d}$ with eigendecomposition $\vec{A} = \sum_{i=1}^{d} \lambda_i \vec{u}_i \vec{u}_i^\T$, the matrix function $f(\vec{A})$ corresponding to a scalar function $f: \R\rightarrow \R$ is defined as
\begin{equation}
\label{eqn:fA_def}
    f(\vec{A}) := \sum_{i=1}^{d} f(\lambda_i) \vec{u}_i \vec{u}_i^\T.
\end{equation}
Matrix functions arise throughout machine learning, data science, and statistics.
For instance, the matrix square root is used in sampling Gaussians, Bayesian modeling, and Gaussian processes \cite{aune_eidsvik_pokern_13,aune_simpson_eidsvik_13,pleiss_jankowiak_eriksson_damle_gardner_20}, general fractional matrix powers are used in Markov chain modeling and R\'enyi entropy estimation \cite{waugh_abel_67,janson_92,jarrow_lando_turnbull_97,dong_gong_yu_li_23}, the matrix logarithm is used for determinantal point processes, kernel learning, and approximating log-determinants for Gaussian process regression and Bayesian inference \cite{dong_eriksson_nickisch_bindel_wilson_17,han_malioutov_avron_shin_17,gardner_pleiss_weinberger_bindel_wilson_18}, the matrix sign is used in principal components regression and spectral density estimation \cite{frostig_musco_musco_sidford_16,dong_eriksson_nickish_bindel_wilson_17,jin_sidford_19,papyan_19,ghorbani_krishnan_xiao_19,yao_gholami_keutzer_mahoney_20,chen_trogdon_ubaru_21, braverman_krishnan_musco_22,chen_trogdon_ubaru_22}, and the matrix exponential is used in network science \cite{aprahamian_higham_higham_15,wang_sun_musco_bao_21,massei_tudisco_24}.
In many of these applications, we do not need to compute the matrix $f(\vec{A})$ itself, but rather its action on a vector $\vec{b} \in \R^d$; i.e., $f(\vec{A})\vec{b}$.
This task can be performed much more efficiently than computing an eigendecomposition of $\vec{A}$ and forming $f(\vec{A})$ using \cref{eqn:fA_def}.

Perhaps the first general purpose method for approximating $f(\vec{A})\vec{b}$ is the Lanczos method for matrix function approximation (Lanczos-FA) \cite{druskin_knizhnerman_89,gallopoulos_saad_92}, which is the focus of the present paper.
Over the past decade, a number of special purpose algorithms, designed for a single function or class of functions, have been developed \cite{eshof_frommer_lippert_schilling_van_der_vorst_02,cundy_krieg_arnold_frommer_lippert_schilling_09,frommer_kahl_schweitzer_tsolakis_23,jin_sidford_19,chen_greenbaum_musco_musco_23}.
These newer algorithms often satisfy strong theoretical guarantees better than the best-known bounds for Lanczos-FA.
The present paper is motivated by the following remarkable observation:
\begin{quote}
    Despite being arguably the simplest and most general algorithm for computing $f(\vec{A})\vec{b}$, \emph{Lanczos-FA frequently outperforms special purpose algorithms}, sometimes by orders of magnitude, on common test problems.
\end{quote}
For instance, in \cref{fig:sqrt_vs_rat,fig:jin_sidford,fig:lanczos_or} we compare Lanczos-FA with specialized algorithms \cite{jin_sidford_19,pleiss_jankowiak_eriksson_damle_gardner_20,chen_greenbaum_musco_musco_23} that satisfy the best-known theoretical guarantees for computing $f(\vec{A})\vec{b}$ for the particular functions they were designed for.
In these experiments Lanczos-FA drastically outperforms these methods, despite the fact that it was not designed for any particular function.
Because of its outstanding performance, Lanczos-FA is perhaps the most commonly used algorithm for computing $f(\vec{A})\vec{b}$ in practice.
The main goal of this paper is to improve our theoretical understanding of \emph{why} Lanczos-FA performs so well, in order to help close the theory-practice gap.

\subsection{Krylov subspace methods}
Lanczos-FA falls into a class of algorithms called Krylov Subspace Methods (KSMs).
KSMs are among the most powerful and widely used algorithms for a broad range of computational tasks including solving linear systems, computing eigenvalues/vectors, and low-rank approximation \cite{saad_03,halko_martinsson_tropp_11,liesen_strakos_13,tropp_webber_23}.
Like other KSMs for computing $f(\vec{A})\vec{b}$, Lanczos-FA iteratively constructs an approximation from the Krylov subspace 
\begin{equation}
\label{eqn:krylov_subspace}
    \mathcal{K}_k(\vec{A},\vec{b}) 
    := \operatorname{span}\{\vec{b}, \vec{A}\vec{b}, \ldots, \vec{A}^{k-1}\vec{b}\}.
\end{equation}
The Lanczos algorithm \cite{lanczos_50} produces an orthonormal basis $\vec{Q} = [\vec{q}_1, \ldots, \vec{q}_k]$ for the Krylov subspace $\mathcal{K}_k(\vec{A},\vec{b})$ and a symmetric tridiagonal matrix $\vec{T}$ satisfying $\vec{T}=\vec{Q}^\T \vec{A} \vec{Q}$ \cite{saad_03}.
The Lanczos-FA algorithm uses this $\vec{Q}$ and $\vec{T}$ to approximate $f(\vec{A})\vec{b}$.
\begin{definition}
The Lanczos-FA iterate for a problem instance $(f,\vec{A},\vec{b},k)$ is defined as 
\begin{equation*}
    \lan{k}{f;\vec{A},\vec{b}} := \vec{Q} f(\vec{T}) \vec{Q}^\T \vec{b},
\end{equation*}
where $\vec{Q}$ and $\vec{T}$ are as above.
\end{definition}
In our analysis we assume exact arithmetic. 
We do not discuss the implementation of Lanczos or Lanczos-FA since there are many resources on this topic; see for instance \cite{musco_musco_sidford_18,carson_liesen_strakos_24,chen_24}.
Fortunately, if the Lanczos algorithm is implemented properly, its finite-precision behavior closely follows its exact-arithmetic behavior on a nearby problem \cite{greenbaum_89}. See \cref{sec:outlook} for further discussion.
% Lanczos-FA can be implemented efficiently in $O(dk + k^2\log k)$ time, plus the time to compute $k$ matrix-vector products with $\vec{A}$; see for instance \cite[Appendix A]{musco_musco_sidford_18}. 

\subsection{Optimality guarantees for Krylov subspace methods}

For a wide variety of problem instances, Lanczos-FA is observed to converge almost as quickly as the \emph{best} approximation to $f(\bv{A})\bv{b}$ that could be returned by \emph{any} KSM run for the same number of iterations. 
In particular, all KSMs output approximations that lie in the span of $\mathcal{K}_k(\vec{A},\vec{b})$, that is, approximations of the form $p(\vec{A})\vec{b}$ for a polynomial $p$ of degree less than $k$. 
Given a problem instance $(f,\vec{A},\vec{b},k)$, the best possible approximation returned by a Krylov method is\footnote{We choose to measure error in the Euclidean norm, and thus define optimality with respect to this norm. While other norms have been considered in prior work on matrix-function approximation \cite{chen_greenbaum_musco_musco_23}, the Euclidean norm a simple starting point.}
\[
\opt{k}{f; \vec{A},\vec{b}} := \operatornamewithlimits{argmin}_{\vec{x}\in\mathcal{K}_k(\vec{A},\vec{b})} \| f(\vec{A})\vec{b} - \vec{x}\|_2.
\]
By definition, the error of this Krylov optimal iterate can be characterized as follows:
\begin{equation}
	\label{eqn:instance_opt}
	\| f(\vec{A})\vec{b} - \opt{k}{f;\vec{A},\vec{b}}
	\|_2
	= \min_{\deg(p)<k} \| f(\vec{A})\vec{b} - p(\vec{A})\vec{b} \|_2.
\end{equation}

For general matrix functions, no efficient algorithm for computing $\opt{k}{f;\vec{A},\vec{b}}$ is known, but as shown in \cref{fig:intro_motivating}, the solution returned by Lanczos-FA often nearly matches the error of this best approximation. It is thus natural to ask if, at least for some class of problems, Lanczos-FA satisfies the following strong notion of approximate optimality:
\begin{definition}[Near Instance Optimality]\label{def:instance_opt}
	\hspace{-.5em} For a problem instance $(f,\vec{A},\vec{b},k)$, we say that a Krylov method is nearly instance optimal with parameters $C$ and $c$ if 
	\[
    \| f(\vec{A})\vec{b} - \textsf{\textup{alg}}_{k}(f;\vec{A},\vec{b})
	\|_2
	\leq C \min_{\deg(p)< ck} \| f(\vec{A})\vec{b} - p(\vec{A})\vec{b} \|_2.
	\]
	Above, $\textsf{\textup{alg}}_{k}(f;\vec{A},\vec{b})$ denotes the output of an algorithm (e.g., Lanczos-FA) obtained from the  Krylov subspace $\mathcal{K}_k(\vec{A},\vec{b})$, i.e., the output after $k$ iterations.
\end{definition}

\begin{figure}
    \centering
    \includesvg[width=\textwidth]{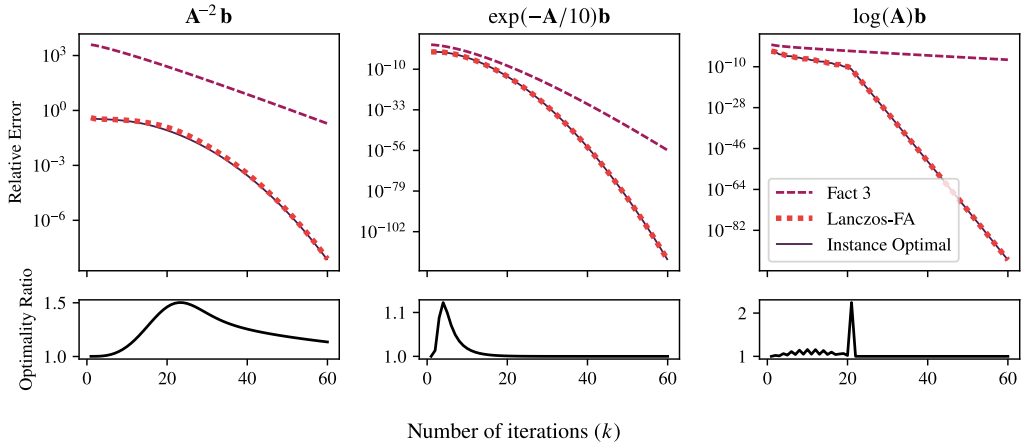}
    \caption{Lanczos-FA error $\| f(\vec{A}) - \lan{k}{f;\vec{A},\vec{b}} \|_2$ at each iteration for several functions/spectra.
    ``Instance Optimal'' is the right hand side of \cref{def:instance_opt} with $C=c=1$, which is a lower bound for all KSMs, including Lanczos-FA. 
    Lanczos-FA performs nearly instance optimally on a wide range of problems, far better than  \cref{fact:lan_is_FOV} predicts.
    This is easily seen in the bottom plots, which show the ratio of the error of the Lanczos-FA iterate and the Krylov optimal iterate, $\opt{k}{f;\vec{A},\vec{b}}$.
    }
    \label{fig:intro_motivating}
\end{figure}

In \Cref{def:instance_opt}, $C \geq 1$ and $c \leq 1$ allow some slack in comparing to the Krylov optimal iterate.
The right hand side depends on the entire problem instance, $(f; \vec{A}, \vec{b})$, which is why we call the guarantee ``instance optimal''.

\subsection{Existing near-optimality analyses of Lanczos-FA}
\label{sec:short_existing_guarantees}

The best bound for Lanczos-FA applying to a broad class of functions is the following common bound for Lanczos-FA; see for instance \cite{saad_92,orecchia_sachdeva_nisheeth_12,musco_musco_sidford_18,chen_greenbaum_musco_musco_22}.
\begin{fact}\label{fact:lan_is_FOV}
For all problem instances $(f, \vec A, \vec b,k)$, Lanczos-FA satisfies 
\[
    \| f(\vec{A})\vec{b} - \textsf{\textup{alg}}_{k}(f;\vec{A},\vec{b}) \|_2
    \leq 2 \|\vec b\|_2 \min_{\deg(p)< k} \left( \max_{x\in[\lmin,\lmax]} |f(x) - p(x)| \right).
\]
\end{fact}
\Cref{fact:lan_is_FOV} is in some sense an optimality guarantee; it compares the convergence of Lanczos-FA to the best possible \emph{uniform} polynomial approximation to $f(x)$. 
However, it does not take into account properties of $\vec{A}$ such as isolated or clustered eigenvalues, and as seen in \cref{fig:intro_motivating}, it typically only gives a  loose upper bound on the performance of  Lanczos-FA. 

To date, near-instance-optimality guarantees for Lanczos-FA akin to those of \cref{def:instance_opt} are known only in a few special cases.
The most well-known is when $f(x) = 1/x$ and $\vec{A}$ is positive definite, in which case Lanczos-FA is mathematically equivalent to the celebrated Conjugate Gradient algorithm, and therefore exactly optimal in the $\vec{A}$-norm \cite{liesen_strakos_13}.
The instance-optimality guarantee for Lanczos-FA in this setting is used to prove well-known super-exponential convergence in certain settings \cite{beckermann_kuijlaars_01,liesen_strakos_13,carson_liesen_strakos_24}.
In contrast, a bound like \cref{fact:lan_is_FOV} only provides exponential convergence and is widely understood by the numerical linear algebra community to not accurately describe the actual behavior of the algorithm in most cases \cite{greenbaum_97,liesen_strakos_13,carson_liesen_strakos_24}.
The only other near-optimality guarantees for Lanczos-FA of which we are aware concern $\vec{A}^{-1}\vec{b}$ for nonsymmetric $\vec{A}$ \cite{chen_meurant_24} and the matrix exponential $\exp(-t\vec{A})\vec{b}$ \cite{druskin_greenbaum_knizhnerman_98}.
In both cases, Lanczos-FA satisfies guarantees that are reminiscent of (although weaker than) \cref{def:instance_opt}.
Further discussion is given in \cref{sec:exist_near_optimal}.

We also remark that there are a number of works which aim to relate the convergence of of Lanczos-FA for functions with certain integral representations to the convergence of conjugate gradient on linear systems \cite{ilic_turner_simpson_09,frommer_guttel_schweitzer_14,frommer_schweitzer_15,frommer_simoncini_09,frommer_kahl_lippert_rittich_13,chen_greenbaum_musco_musco_22}. 
While these analyses provides spectrum dependent convergence guarantees, they are weaker than \cref{def:instance_opt} because it is not clear how the best possible KSM approximation to a given function relates to the convergence of conjugate gradient. 
These bounds were mostly developed for use as a posteriori stopping criteria rather than as a theoretical explication for the behavior of Lanczos-FA.

\subsection{Our contributions}
\label{sec:contributions}
In \cref{sec:rationals}, we prove near instance optimality for a broad class of rational functions (\cref{thm:main}). To the best of our knowledge, this is the first true instance-optimality guarantee for Lanczos-FA to be proven for any function besides $f(x) = 1/x$.  In \cref{sec:triangle_inequality}, we discuss how results of this kind imply related guarantees for functions that are uniformly well-approximated by rationals, which includes many functions of interest in machine learning. 
In \cref{sec:sqrt}, we additionally show that Lanczos-FA satisfies a weaker version of near optimality for two crucial non-rational matrix functions---the square root and inverse square root (\cref{thm:inv_sqrt,thm:sqrt}). \cref{sec:opt_minimax} compares the this version of optimality to \cref{def:instance_opt} (near instance optimality) and to that of \cref{fact:lan_is_FOV}.
In \cref{sec:experiments}, we present experimental evidence showing that, for many natural problem instances, our bounds are significantly sharper than the standard bound of \cref{fact:lan_is_FOV}. 
These experiments also demonstrate that despite its generality, Lanczos-FA often converges significantly faster than other methods in practice.
We conclude with a discussion of next steps and open problems in \cref{sec:outlook}.

\section{Near optimality for rational functions}
\label{sec:rationals}
In this section, we study the Lanczos-FA approximation to $r(\vec{A}) \vec{b}$, where $r(x)$ is a rational function with real-valued poles that lie in $\mathbb{R}\setminus \mathcal{I}$, where $\mathcal{I} := [\lmin,\lmax]$.
Specifically, 
\begin{equation}
\label{eqn:rat_form}
    r(x) := {n(x)}/{m(x)},
\end{equation}
where $n(x)$ is any degree $p$ polynomial and 
\begin{align*}
	m(x) = (x - z_1) (x-z_2) \cdots (x-z_q)
	,\qquad z_i \in \mathbb{R}\setminus \mathcal{I}.
\end{align*}
Since $z_j\not\in\mathcal{I}$, $\pm(\vec{A} - z_j \vec{I})$ is either positive definite or negative definite. 
For convenience, we define
\begin{equation}
\label{eq:aj}
    \vec{A}_j := 
    \begin{cases}
    +(\vec{A} - z_j \vec{I}) & z_j < \lmin \\ 
    -(\vec{A} - z_j \vec{I}) & z_j > \lmax \\ 
    \end{cases}.
\end{equation}

Our main result on rational functions is the following near-instance-optimality bound, which holds under a mild assumption on the number of iterations $k$:
\begin{theorem}
Let $r(x) = {n(x)}/{m(x)}$ be a degree $(p,q)$-rational function as in \cref{eqn:rat_form} and define $\vec{A}_j$ as in \cref{eq:aj}. 
Then, if $k > \max \{p, q-1\}$, the Lanczos-FA iterate satisfies the bound
\label{thm:main}
    \[  
    \|r(\vec{A})\vec{b} - \lan{k}{r;\vec{A},\vec{b}}\|_2
    \leq
    q \cdot \kappa(\vec{A}_1) \cdot \kappa(\vec{A}_2) \cdots \kappa(\vec{A}_q)  \min_{\mathrm{deg}(p)<k-q+1}\| r(\vec{A})\vec{b} - p(\vec{A}) \vec{b} \|_2. 
    \]
\end{theorem}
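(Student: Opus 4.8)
The plan is to induct on the number of poles $q$, peeling off one pole at each step and using the fact that the Lanczos-FA iterate for a shifted resolvent is a conjugate-gradient-type (Galerkin) approximation, for which optimality in an appropriate energy norm is available (cf.\ \cref{eq:inv_opt}).

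\emph{Reductions and the peeling identity.} If Lanczos breaks down before step $k$ (so $\dim\mathcal K_k(\vec A,\vec b)<k$), then $\mathcal K_k$ is $\vec A$-invariant, $r(\vec A)\vec b\in\mathcal K_k$, and $\lan{k}{r}=r(\vec A)\vec b$, so the bound is trivial; hence assume $\dim\mathcal K_k=k$. (Then $\vec T=\vec Q^\T\vec A\vec Q$ has all eigenvalues in $\mathcal I$, so $\vec T-z_j\vec I$ is invertible and $\vec Q^\T\vec A_j\vec Q\succ 0$.) Define $r_j(x):=n(x)\big/\prod_{i=1}^{j}(x-z_i)$, so $r_0=n$ is a polynomial of degree $<k$ with $\lan{k}{r_0}=n(\vec A)\vec b$, $r_q=r$, and $r_{j-1}(x)=(x-z_j)\,r_j(x)$. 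From $\lan{k}{r_j}=\vec Q\,r_j(\vec T)\,\vec Q^\T\vec b\in\operatorname{col}(\vec Q)$, the identity $(\vec T-z_j\vec I)r_j(\vec T)=r_{j-1}(\vec T)$, and $\vec Q^\T(\vec A-z_j\vec I)\vec Q=\vec T-z_j\vec I$, one obtains $\vec Q^\T\big((\vec A-z_j\vec I)\lan{k}{r_j}\big)=\vec Q^\T\lan{k}{r_{j-1}}$; that is, $\lan{k}{r_j}$ is exactly the Galerkin approximation out of $\mathcal K_k(\vec A,\vec b)$ to the shifted system $(\vec A-z_j\vec I)\vec y=\lan{k}{r_{j-1}}$, whose exact solution would be $r_j(\vec A)\vec b$ if the right-hand side were the exact $r_{j-1}(\vec A)\vec b$.

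\emph{The inductive step.} Compare $\lan{k}{r_j}$ with $\widetilde{\vec y}_j$, the Galerkin approximation out of $\mathcal K_k$ to the \emph{exact} system $(\vec A-z_j\vec I)\vec y=r_{j-1}(\vec A)\vec b$. Because the Galerkin map is linear in the right-hand side, $\widetilde{\vec y}_j-\lan{k}{r_j}$ is the Galerkin approximation for the system with right-hand side $r_{j-1}(\vec A)\vec b-\lan{k}{r_{j-1}}$, and a Galerkin approximation never has larger $\vec A_j$-norm than the true solution of its system; hence $\|\widetilde{\vec y}_j-\lan{k}{r_j}\|_{\vec A_j}\le\|(\vec A-z_j\vec I)^{-1}(r_{j-1}(\vec A)\vec b-\lan{k}{r_{j-1}})\|_{\vec A_j}\le\lmin(\vec A_j)^{-1/2}\|r_{j-1}(\vec A)\vec b-\lan{k}{r_{j-1}}\|_2$. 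Meanwhile $\widetilde{\vec y}_j$ is genuinely CG-optimal, so $\|r_j(\vec A)\vec b-\widetilde{\vec y}_j\|_{\vec A_j}=\min_{\vec y\in\mathcal K_k}\|r_j(\vec A)\vec b-\vec y\|_{\vec A_j}\le\lmax(\vec A_j)^{1/2}\min_{\deg p<k-j+1}\|r_j(\vec A)\vec b-p(\vec A)\vec b\|_2$, using $\mathcal K_{k-j+1}\subseteq\mathcal K_k$. A triangle inequality, followed by $\|\,\cdot\,\|_2\le\lmin(\vec A_j)^{-1/2}\|\,\cdot\,\|_{\vec A_j}$, yields the recursion
\[
\big\|r_j(\vec A)\vec b-\lan{k}{r_j}\big\|_2\le\frac{1}{\lmin(\vec A_j)}\big\|r_{j-1}(\vec A)\vec b-\lan{k}{r_{j-1}}\big\|_2+\sqrt{\kappa(\vec A_j)}\,\min_{\deg p<k-j+1}\big\|r_j(\vec A)\vec b-p(\vec A)\vec b\big\|_2.
\]
Separately, sending a polynomial $p$ of degree $<k-j+1$ to $(x-z_j)p$ and using $r_{j-1}(x)=(x-z_j)r_j(x)$ shows $\min_{\deg p<k-j+2}\|r_{j-1}(\vec A)\vec b-p(\vec A)\vec b\|_2\le\lmax(\vec A_j)\min_{\deg p<k-j+1}\|r_j(\vec A)\vec b-p(\vec A)\vec b\|_2$.

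\emph{Unrolling, and the main obstacle.} Iterating the recursion from $j=q$ down to $j=0$ (where the error is zero) and using the last inequality to rewrite each $\min_{\deg p<k-j+1}\|r_j(\vec A)\vec b-p(\vec A)\vec b\|_2$ in terms of $\eta:=\min_{\deg p<k-q+1}\|r(\vec A)\vec b-p(\vec A)\vec b\|_2$, the $\lmin$ and $\lmax$ factors combine into condition numbers, giving $\|r(\vec A)\vec b-\lan{k}{r}\|_2\le\eta\sum_{j=1}^q\sqrt{\kappa(\vec A_j)}\prod_{l=j+1}^q\kappa(\vec A_l)\le q\prod_{l=1}^q\kappa(\vec A_l)\cdot\eta$, since $\sqrt{\kappa(\vec A_j)}\le\kappa(\vec A_j)$ and each $\kappa(\vec A_l)\ge 1$. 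I expect the real difficulty to be precisely this constant bookkeeping. The naive inductive step — splitting the error as $(\vec A-z_j\vec I)^{-1}\big(r_{j-1}(\vec A)\vec b-\lan{k}{r_{j-1}}\big)$ plus the Galerkin error for right-hand side $\lan{k}{r_{j-1}}$, then bounding the latter by a triangle inequality through $r_j(\vec A)\vec b$ — double-counts the propagated error and produces a prefactor growing like $2^q$. Obtaining the clean coefficient $1/\lmin(\vec A_j)$ (hence the linear-in-$q$ prefactor and the degree shift of exactly $q-1$) seems to require the above comparison of the two Galerkin iterates, working entirely in the $\vec A_j$-norm and invoking linearity in the right-hand side, and converting to the $2$-norm only once at the end. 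A minor additional point is the sign bookkeeping for poles $z_j>\lmax$ and the possibility of repeated poles, but phrasing everything through the positive definite matrices $\vec A_j$ makes this harmless.
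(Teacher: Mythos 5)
Your proof is correct and follows essentially the same route as the paper: the identity you obtain from linearity of the Galerkin map (that $\widetilde{\vec y}_j - \lan{k}{r_j} = \vec Q(\vec T - z_j\vec I)^{-1}\vec Q^\T(r_{j-1}(\vec A)\vec b - \lan{k}{r_{j-1}})$) is exactly the paper's Lemma~\ref{thm:err_decomp} / equation~\cref{eqn:err_decomp}, and your shift of the approximating polynomial by $(x-z_j)$ is the paper's Lemma~\ref{thm:rj_opt_bound}. The only cosmetic difference is that the paper unrolls the identity first and bounds the composite propagator $\|\vec Q\,m_{j+1,q}(\vec T)^{-1}\vec Q^\T\|_2 \le \|m_{j+1,q}(\vec A)^{-1}\|_2$ (a slightly tighter intermediate bound than your per-step $\prod_l \lmin(\vec A_l)^{-1}$), whereas you apply the triangle inequality and convert out of the $\vec A_j$-norm at every step; both simplifications collapse to the same stated constant $q\prod_l \kappa(\vec A_l)$.
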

We prove this theorem in \cref{sec:main_proof}. Above, $\kappa(\vec{A}_i)$ is the condition number of $\vec{A}_i$, the ratio of the largest to smallest magnitude eigenvalues of $\vec{A}_i$. 
\Cref{thm:main} shows that Lanczos-FA used to approximate $r(\vec{A})\vec{b}$ satisfies near instance optimality, as in \cref{def:instance_opt}, with 
\[
C = q \cdot \kappa(\vec{A}_1) \cdot \kappa(\vec{A}_2) \cdots \kappa(\vec{A}_q)
,\qquad 
c = 1- (q-1)/k.
\]
In particular, when $\vec{A}$ is positive definite and each of the $z_i$ are negative (as is the case for rational function approximations of many functions including the square root \cite{hale_higham_trefethen_08}), then $C\leq q \kappa(\vec{A})^q$.

As discussed in \Cref{sec:outlook}, we expect that \cref{thm:main} can be tightened by significantly reducing the prefactor. Nevertheless,
as illustrated in \cref{fig:bound_example}, even in its current form, the bound improves on the standard bound of \cref{fact:lan_is_FOV} in many natural cases; i.e., it more tightly characterizes the observed convergence of Lanczos-FA.
Finally, as discussed further in \Cref{sec:triangle_inequality}, we note that, beyond rational functions being  an interesting function class in their own right, \cref{thm:main} has implications for understanding the convergence of Lanczos-FA for functions like the square root which are much easier to approximate with rational functions than polynomials.

\begin{figure}
    \centering
    \includesvg[width=\textwidth]{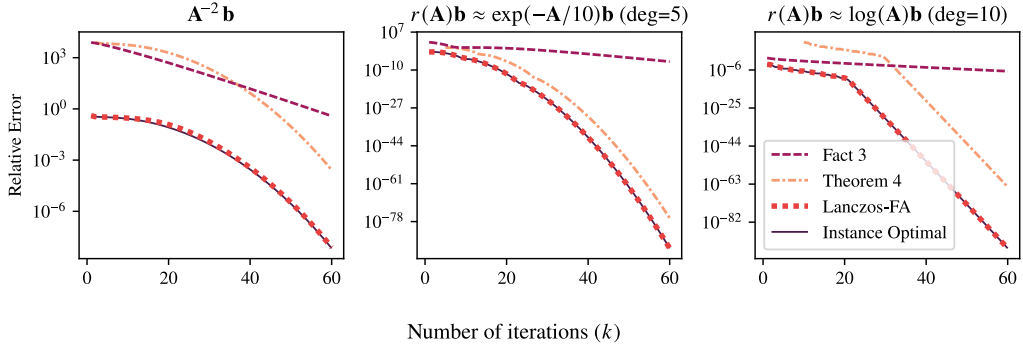}
    \caption{Despite its large prefactor, the bound of \cref{thm:main} qualitatively captures the convergence behavior of Lanczos-FA for rational functions. It can be tighter than the standard bound of \cref{fact:lan_is_FOV}, even for a moderate number of iterations $k$. We use rational approximations to $\exp(-x/10)$ and $\log(x)$ for comparison with \cref{fig:intro_motivating}; see \cref{sec:experiments} for more details.}
    \label{fig:bound_example}
\end{figure}

\subsection{Proof Sketch}
Our proof of \Cref{thm:main} leverages the near optimality of Lanczos-FA in computing $\vec{A}^{-1}\vec{b}$ to obtain a bound for general rational functions. 
For illustration, consider the simplest possible rational function for which we are unaware of any previous near-optimality bounds:  $\vec{A}^{-2}\vec{b}$ when $\vec{A}$ is positive definite.
The Lanczos-FA approximation for this function is $\vec{Q}\vec{T}^{-2}\vec{Q}^\T\vec{b}=\vec{Q}\vec{T}^{-1}\vec{Q}^\T \vec{Q} \vec{T}^{-1} \vec{Q}^\T\vec{b}$.
Using the triangle inequality and  submultiplicativity, we can bound the error of the approximation as
\begin{align}
\hspace{3em}&\hspace{-3em}\| \vec{A}^{-2}\vec{b} - \lan{k}{x^{-2};\vec{A},\vec{b}} \|_2 
\nonumber\\&\leq \|\vec{A}^{-2}\vec{b} - \vec{Q}\vec{T}^{-1}\vec{Q}^\T\vec{A}^{-1}\vec{b} \|_2 + \| \vec{Q}\vec{T}^{-1}\vec{Q}^\T\vec{A}^{-1}\vec{b} - \vec{Q}\vec{T}^{-2}\vec{Q}^\T\vec{b} \|_2 
\nonumber\\&\leq \|\vec{A}^{-2}\vec{b} - \vec{Q}\vec{T}^{-1}\vec{Q}^\T\vec{A}\vec{A}^{-2}\vec{b} \|_2 + \| \vec{Q}\vec{T}^{-1}\vec{Q}^\T \|_2 \cdot \|\vec{A}^{-1}\vec{b} - \vec{Q}\vec{T}^{-1}\vec{Q}^\T\vec{A}\vec{A}^{-1}\vec{b} \|_2 .
\label{eqn:A-2bd}
%\\&\leq \frac{2}{\lmin}\|  \vec{A}^{-1}\vec{b} - \vec{Q}\vec{T}^{-1}\vec{Q}^\T\vec{b}\|_2.
\end{align}
Using the normal equations and the fact that $\vec{Q}$ is a basis of the Krylov subspace, it is possible to show that $\vec{Q}\vec{T}^{-1} \vec{Q}^\T \vec{A} = \vec{Q}(\vec{Q}^\T\vec{A}\vec{Q})^{-1} \vec{Q}^\T \vec{A}$ is the $\vec{A}$-norm\footnote{The $\vec{A}$-norm is defined for positive definite $\vec{A}$ by $\|\vec{x}\|_{\vec{A}} = \|\vec{A}^{1/2}\vec{x}\|_2$.} projector onto Krylov subspace.
Hence
\begin{align*}
    \|\vec{A}^{-2}\vec{b} - \vec{Q}\vec{T}^{-1}\vec{Q}^\T\vec{A}\vec{A}^{-2}\vec{b} \|_{\vec{A}}
    &= \min_{\vec{x}\in\mathcal{K}_k(\vec{A},\vec{b})}\|\vec{A}^{-2}\vec{b} - \vec{x}\|_{\vec{A}}
    \\&:=
    \min_{\deg(p)<k}\|\vec{A}^{-2}\vec{b} - p(\vec{A})\vec{b}\|_{\vec{A}}.
\end{align*}
Therefore, using $\|\vec{x}\|_2 \leq (1/ \sqrt{\lmin}) \| \vec{x}\|_{\vec{A}}$ and $\|\vec{x}\|_{\vec{A}} \leq \sqrt{\lmax} \| \vec{x}\|_2 $, the first term on the right hand side of \cref{eqn:A-2bd} can be bounded as
\[
\|\vec{A}^{-2}\vec{b} - \vec{Q}\vec{T}^{-1}\vec{Q}^\T\vec{A}\vec{A}^{-2}\vec{b} \|_2
\leq \sqrt{\kappa(\vec{A})} \min_{\deg(p)<k}\|\vec{A}^{-2}\vec{b} - p(\vec{A})\vec{b}\|_2.
\]
Similarly, the second factor of the second term on the right hand side of \cref{eqn:A-2bd} can be bounded as
\[
\|\vec{A}^{-1}\vec{b} - \vec{Q}\vec{T}^{-1}\vec{Q}^\T\vec{A}\vec{A}^{-1}\vec{b} \|_2 
\leq \sqrt{\kappa(\vec{A})} \min_{\deg(p)<k}\|\vec{A}^{-1}\vec{b} - p(\vec{A})\vec{b}\|_2.
\]
Then, since $xp(x)$ is polynomial of one degree larger than $p(x)$,
\begin{align*}
     \min_{\deg(p)<k}\|\vec{A}^{-1}\vec{b} - p(\vec{A})\vec{b}\|_2
     &\leq  \min_{\deg(p)<k-1}\| \vec{A} \vec{A}^{-2}\vec{b} - \vec{A} p(\vec{A})\vec{b}\|_2
     \\&\leq \lmax \min_{\deg(p)<k-1}\| \vec{A}^{-2}\vec{b} - p(\vec{A})\vec{b}\|_2.
\end{align*}
Plugging the above bounds into \cref{eqn:A-2bd} and using the fact that the eigenvalues of $\vec{T}$ are contained in $[\lmin,\lmax]$ so that $\|\vec{Q}\vec{T}^{-1}\vec{Q}^\T\|_2 \leq \|\vec{T}^{-1}\|_2 \leq 1/\lmin$,
\[
\| \vec{A}^{-2}\vec{b} - \lan{k}{x^{-2};\vec{A},\vec{b}} \|_2 
\leq 2\kappa(\vec{A})^{3/2}\min_{\deg(p)<k-1}\| \vec{A}^{-2}\vec{b} - p(\vec{A})\vec{b}\|_2.
\]
Bounding $\kappa(\vec{A})^{3/2}$ by $\kappa(\vec{A})^2$ gives the bound in \cref{thm:main}.

Our proof of \cref{thm:main} generalizes the above approach.
We write the Lanczos-FA error for approximating $r(\vec{A})\vec{b}$ in terms of the error of the optimal approximations to a set of simpler rational functions, and then reduce polynomial approximation of $r(x)$ to polynomial approximation of each of these particular functions.

\subsection{Implications for non-rational functions}
\label{sec:triangle_inequality}
\Cref{thm:main} can be used to derive guarantees for other (non-rational) functions. 
In particular, consider any function $f(x)$ that is uniformly well approximated by a low-degree rational function $r(x)$ on $\mathcal{I} = [\lmin, \lmax]$, the interval containing all of the eigenvalues of $\vec A$; i.e., suppose $\|r - f\|_{\mathcal I} := \max_{x \in \mathcal I}|r(x) - f(x)|$ is small.
A natural way to approximate $f(\vec A)\vec b$, used in \cite{aune_eidsvik_pokern_13,aune_simpson_eidsvik_13,pleiss_jankowiak_eriksson_damle_gardner_20}, is to construct $r(x)$ and output $r(\vec A)\vec b$, using some iterative linear solver to quickly apply the denominators of the partial fraction decomposition of $r(x)$.
Alternatively, if we have an instance-optimality guarantee for Lanczos-FA on rational functions like \cref{thm:main}, we could use Lanczos-FA to compute $r(\vec A)\vec b$.
The following analysis shows that simply using Lanczos-FA on $f(x)$ itself cannot be much worse.
\begin{lemma}
\label{lem:triangle}
Assume the we have the following instance-optimality guarantee for rational functions:
\begin{equation}\label{eqn:hypothetical_instance_opt}
\| r(\vec{A})\vec{b} - \lan{k}{r;\vec{A},\vec{b}} \|_2 \leq C_r \min_{\deg(p)<c_r k}\| r(\vec{A})\vec{b} - p(\vec{A})\vec{b} \|_2.
\end{equation}
Here, $C_r$ and $c_r$ depend on the choice of approximant $r(x)$.
Then the error of Lanczos-FA on $f(x)$ is bounded as follows:
\begin{align}
\nonumber
\hspace{5em}&\hspace{-5em}\| f(\vec{A})\vec{b} - \lan{k}{f;\vec{A},\vec{b}} \|_2
\\&\leq \min_{r} \Big((C_r + 2) \|\vec{b}\|_2  \cdot\| f -r \|_{\mathcal{I}}
+ C_r \min_{\deg(p)<c_r k} \| f(\vec{A})\vec{b} - p(\vec{A}) \vec{b} \|_2 \Big).
\label{eq:triangle}
\end{align}
\end{lemma}

We prove this lemma, which follows directly from the triangle inequality, in \cref{sec:triangle_proof}.
Compare the form of this bound to that of \cref{def:instance_opt,def:FOV_opt}. It is close to a near-instance-optimality guarantee, except for the first term, which requires $f(x)$ to be uniformly well-approximated by a rational function $r(x)$ on $[\lmin, \lmax]$. This is still much stronger than \cref{fact:lan_is_FOV}, which requires $f(x)$ to be {uniformly} well-approximated by a \emph{polynomial} to guarantee that Lanczos-FA provides a good approximation. 
There are many functions with lower degree rational approximations than polynomial approximations, even when we require the rational function $r(x)$ to have poles only in $\R \setminus \mathcal{I}$ (as in our \cref{thm:main}).
Such rational approximations are obtainable by the Remez algorithm \cite[Chapter~24]{trefethen_19}, and for many important functions they are also known explicitly.
For example, a uniform polynomial approximation to the square root on a strictly positive interval $[\lmin, \lmax]$ requires degree $\Omega(\sqrt{\lmax/\lmin})$ \cite[Chapter~8]{trefethen_19}. On the other hand, a uniform rational approximation can be obtained with degree only $O(\log (\lmax/\lmin))$ \cite[Chapter~25]{hale_higham_trefethen_08,trefethen_19}. Likewise, a uniform polynomial approximation to $\exp(-x)$ on the interval $[0, B]$ requires degree $\Omega(\sqrt{B})$ \cite{aggarwal_alman_22}, but uniform rational approximations can be constructed with no dependence on $B$ \cite{saff_schonhage_76}.
For such functions, we expect \cref{eq:triangle} to be stronger than \cref{fact:lan_is_FOV}.

Notice also that, while in \Cref{lem:triangle}, we assume $f(x)$ is well-approximated by a rational function, we are not required to actually construct the approximation. Indeed, since it holds for any $r(x)$, instead of fixing a rational approximation of a certain degree, \cref{eq:triangle} automatically balances $\|r - f\|_{\mathcal I}$, which decreases as the degree grows, with $C_r$, which may increase as the degree grows (see \cref{fig:sqrt_vs_rat}).

\section{Near Spectrum Optimality for \texorpdfstring{$\vec A^{\pm 1/2}\vec b$}{A{±1/2}b}}
\label{sec:sqrt}
In the previous section, we proved that Lanczos-FA is nearly instance optimal for rational functions in the sense of \cref{def:instance_opt}.
In this section, we prove that Lanczos-FA satisfies a weaker form of near optimality for two important non-rational functions: square root and inverse square root.
We term this weaker form of guarantee ``near spectrum optimality''.
In \cref{sec:opt_minimax}, we formally define this notion  and compare it to \cref{def:instance_opt}. We first state our bound for the inverse square root.
\begin{theorem}\label{thm:inv_sqrt}
Let $\Lambda$ be the spectrum of $\vec A$. Then for $k \geq 2$, the Lanczos-FA iterate satisfies the bound
\[ 
    \|\vec A^{-1/2}\vec b - \lan{k}{x^{-1/2};\vec{A},\vec{b}}\|_2 \leq \frac3{\sqrt{\pi k}} \kappa(\vec A)
    \|\vec b\|_2
    \min_{\mathrm{deg}(p) < k/2}
    \left(\max_{x \in \Lambda} \left|\frac1{\sqrt{x}} - p(x)\right|\right). 
\]
\end{theorem}
That is, Lanczos-FA applied to the inverse square root satisfies \cref{def:spectrum_opt} (``near spectrum optimality'') with
\[
C = \frac3{\sqrt{\pi k}} \kappa(\vec A), \qquad c = \frac12.
\]
We prove this theorem in \cref{sec:sqrt proofs}.
The proof relies on comparing the error of the $k$th Lanczos iterate for $x^{-1/2}$ to that of the Lanczos iterate for $x^{-1}$.
First, applying a bound from \cite{chen_greenbaum_musco_musco_22}, we use the Cauchy integral formula to upper bound the error of Lanczos-FA on $x^{\pm 1/2}$ by its error on $x^{-1}$ (\cref{lem:cauchy}).
Second, as Equation \cref{eq:inv_opt} shows, Lanczos-FA is nearly instance optimal for the function $x^{-1}$; that is, it outputs $p(\vec A)\vec b$ where $p$ is (nearly) the degree $k$ polynomial that best approximates $x^{-1}$.
Third, the best degree $k$ polynomial approximation to $x^{-1}$ must have lower error than the best degree $k/2$ approximation to $x^{-1/2}$. This is because any degree $k/2$ approximation to $x^{-1/2}$ can be squared to yield a good degree $k$ polynomial approximation to $x^{-1}$. Combining these three steps upper bounds the error of the $k$th Lanczos-FA iterate for $x^{-1/2}$ by the error of the best degree $k/2$ polynomial approximation of $x^{-1/2}$.

Nearly the same argument can be used to prove spectrum optimality of Lanczos-FA for the function $\vec A^{-1/n} \vec b$ for any $n \in \mathbb N$ with $C = (2^n - 1)\cdot \kappa(\vec A)/\sqrt{\pi k}$. Furthermore, using \cref{lem:spectrum2instance} of \cref{sec:opt_minimax}, we can convert \cref{thm:inv_sqrt} into a near-instance-optimality guarantee at the price of strong dependence of $C$ on $\vec b$. We next state our optimality result for the matrix square root.
\begin{theorem}\label{thm:sqrt}
Let $\Lambda$ be the spectrum of $\vec A$. Then for $k \geq 2$, the Lanczos-FA iterate satisfies the bound
\[
\|\vec A^{1/2}\vec b - \lan{k}{x^{1/2};\vec{A},\vec{b}}\| \leq \frac{3\kappa(\vec A)^2}{k^{3/2}} \|\vec b\|_2
\min_{\mathrm{deg}(p) < k/2 + 1} \left(\max_{x \in \Lambda \cup \{0\}} \left|\sqrt{x} - p(x)\right|\right).
\]
\end{theorem}
This bound resembles \cref{def:spectrum_opt} with
\[ 
C = \frac{3\kappa(\vec A)^2}{k^{3/2}}, \qquad c=\frac12.
\]
However, it is slightly weaker in that the maximization is taken over $\Lambda \cup \{0\}$ instead of only $\Lambda$.  
% A slightly weaker but more intuitive consequence of this bound maximizes over $\Lambda$, but minimizes only over polynomials for which $p(0) = 0$.

The proof of \cref{thm:sqrt} is nearly the same as that of \cref{thm:inv_sqrt}, and it likewise appears in \cref{sec:sqrt proofs}. Ideally, if $p$ is a polynomial approximation to $x^{1/2}$, we would like to claim that $(p(x)/x)^2$ yields a good polynomial approximation to $x^{-1}$. However, since this function is not necessarily a polynomial, we must instead use use $\left(\frac{p(x) - p(0)}{x}\right)^2$, which introduces the need to include $\{0\}$ in the maximization on the right-hand side.

\section{Experiments}
\label{sec:experiments}

We now present several numerical experiments to assess the quality of our instance-optimality bounds,  \cref{thm:main,lem:triangle}. 
Our results show that, despite the large prefactor $C$, our bounds already supersede the standard uniform approximation bound (\cref{fact:lan_is_FOV}) in many cases.
We also compare Lanczos-FA against several recently proposed algorithms for computing matrix functions with strong theoretical guarantees. We find that, in practice, Lanczos-FA performs better than all of them.
We implement Lanczos-FA in high precision arithmetic using the \texttt{flamp} library, which is based on \texttt{mpmath} \cite{mpmath}, in order to mitigate any potential impacts of finite precision arithmetic and observe the convergence behavior of the algorithm beyond the standard machine precision.\footnote{Code for our experiments is available 
at \url{https://github.com/NoahAmsel/lanczos-optimality/tree/neurips2024_near_optimality.}.
}

In \cref{fig:intro_motivating}, we compare the performance of Lanczos to the instance-optimal KSM (which we can compute by direct methods) and against \cref{fact:lan_is_FOV} for various matrix functions and spectra. 
We use three test matrices $\vec{A} \in \R^{100 \times 100}$ which all have condition number $100$. 
The first has a uniformly-spaced spectrum, the second has a geometrically-spaced spectrum, and the third has eigenvalues that are all uniformly-spaced in a narrow interval except for ten very small eigenvalues. 
We compute the bound from \cref{fact:lan_is_FOV} using the Remez algorithm and compute the instance-optimal approximation using least squares regression onto the Krylov basis $\vec{Q}$.
In \cref{fig:intro_motivating}, as in almost all cases we tried, Lanczos-FA performs nearly as well as the instance-optimal approximation. 
For instance, the error is never more than a small multiple of the optimal error in the experiments we did.

To better understand \cref{thm:main}, we compare the bound to the true convergence curve of Lanczos-FA for various rational functions in \cref{fig:bound_example}. 
We also plot \cref{fact:lan_is_FOV} for reference.
We use the same matrices and $\vec{b}$ vectors as in \cref{fig:intro_motivating}; results are similar if $\vec{b}$ is instead  chosen as a uniform linear combination of $\vec{A}$'s eigenvectors. 
We choose rational functions to match the functions used for \cref{fig:intro_motivating}. We construct a degree 5 rational approximation to $\exp(-x/10)$ following  \cite{saff_schonhage_76}.
We construct a degree 10 approximation to $\log(x)$ using the BRASIL algorithm \cite{clemens_21} and verify that it has real poles outside the interval of the eigenvalues.
Despite \cref{thm:main}'s exponential dependence on the degree of the rational function being applied, \cref{fig:bound_example} shows that it matches the shape of the convergence curve well and is tighter than \cref{fact:lan_is_FOV} when the number of iterations is large. 
That said, in all cases plotted, Lanczos-FA always returns an approximation much closer to optimal than predicted by \cref{thm:main}, suggesting that the leading coefficient in our bound is pessimistic. 
% Most of the gap seems to be due to the large coefficient in \cref{thm:main}, as the shape of the curve is correct.

\subsection{Dependence on the rational function degree}
\label{sec:rational_degree}

\Cref{thm:main} upper bounds the optimality ratio by $C = q \cdot \kappa(\vec{A}_1) \cdot \kappa(\vec{A}_2) \cdots \kappa(\vec{A}_q)$. 
We conjecture that this bound is far from tight. 
However, the following experiment provides evidence that it is not possible to entirely eliminate the dependence on the rational function's denominator degree $q$. In particular, 
for parameters $(\kappa,q)$, consider approximating $\vec{A}^{-q}\vec{b}$ where $\vec{A}$ has spectrum $\lambda_1 = 1$ and $\lambda_2, \ldots, \lambda_{100}$ evenly spaced between $0.999995 \cdot \kappa$ and $\kappa$. In this case, $\kappa(\vec{A}_1) = \cdots = \kappa(\vec{A}_q) = \kappa(\vec{A}) = \kappa$.
We generate a grid of problems by picking different combinations of $(\kappa, q)$ and tuning $\vec{b}$ in a limited way to maximize the maximum ratio between the error of Lanczos-FA and the optimal Krylov error over all iterations. In particular, we took $\vec{b}$ to be an all ones vector, except we varied its first entry, using grid search to maximize the optimality ratio. 
The results, plotted in \cref{fig:opt_lower_bound}, suggest that the optimality ratio grows at least as $\Omega(\sqrt{q \cdot \kappa(\vec{A})})$.
We have yet to find harder problem instances than this.

\begin{figure}
    \centering
    \includesvg[width=0.8\textwidth]{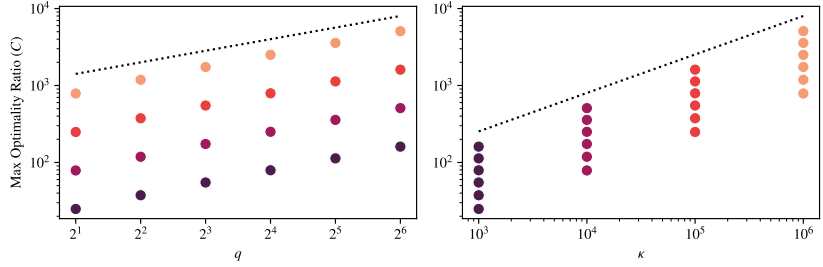}
    \caption{The maximum observed ratio between the error of Lanczos-FA and the optimal error over choices of $\vec{b}$ when approximating $\vec{A}^{-q}$ for matrices with varying condition number $\kappa$. Each point corresponds to a pair $(\kappa, q)$. Points with the same color have the same value of $\kappa$. On the left, the dotted line plots $\sqrt{q  \kappa}$ for the maximum $\kappa$ considered ($10^6$). On the right, the dotted line plots $\sqrt{q  \kappa}$ for the maximum $q$ considered ($2^6$). Overall, the optimality ratio appears to scale at least as $\Omega(\sqrt{q  \kappa})$.}
    \label{fig:opt_lower_bound}
\end{figure}

\subsection{Non-rational functions}
As noted in \cref{sec:triangle_inequality}, an instance-optimality guarantee for rational functions also implies that Lanczos-FA performs well on functions that are well-approximated by rationals. As an example, we consider the function $f(x) = x^{-0.4}$, for which a rational approximation in any degree can be found using the BRASIL algorithm \cite{clemens_21}. \Cref{fig:sqrt_vs_rat} shows how applying Lanczos-FA to these rational approximations compares to applying Lanczos-FA directly to the $f(x)$ itself to approximate $\vec{A}^{-0.4}\vec{b}$. When the number of iterations is small, both methods perform nearly optimally, as the accuracy is limited more by the small size of the Krylov subspace than by the difference between $f(x)$ and the rational approximant (that is, the second term in \cref{eq:triangle} dominates the first term). As the number of iterations grows, the error due to approximating $f(x)$ in the Krylov subspace continues to decrease while the error of uniformly approximating $f(x)$ by some fixed rational function remains fixed (that is, the first term in \cref{eq:triangle} dominates the first term); however, increasing the degree of the rational approximant decreases this source of error. This shows that understanding the convergence of Lanczos-FA for the entire family of rational approximations goes a long way toward explaining its convergence for non-rational functions. In the limit, Lanczos-FA applied to $f(x)$ itself appears to automatically inherit the instance optimality of a suitably high-degree rational approximation.

\begin{figure}
    \centering
    \includesvg[width=0.75\textwidth]{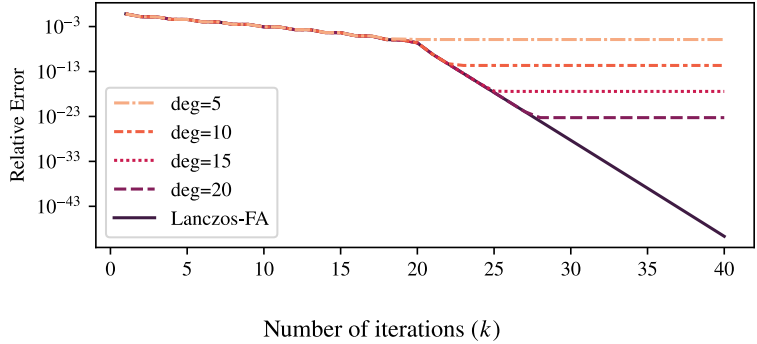}
    \caption{Applying Lanczos-FA to the function $\vec A^{-0.4}$ and rational approximations of various degrees found using the BRASIL algorithm \cite{clemens_21}. In this experiment, the spectrum of $\vec{A}$ contains two clusters: 10 eigenvalues uniformly spaced near 1, and 90 eigenvalues uniformly spaced near 100. As predicted by the bound in \cref{sec:triangle_inequality}, convergence of Lanczos-FA for this function appears to closely track that of a high degree rational approximant.}
    \label{fig:sqrt_vs_rat}
\end{figure}

This result has an additional implication. A number of papers use explicit rational approximations to compute non-rational matrix functions \cite{aune_eidsvik_pokern_13, aune_simpson_eidsvik_13,guttel_schweitzer_21,pleiss_jankowiak_eriksson_damle_gardner_20}.
These approximations are often applied by applying conjugate gradient (or a related method) to each of the terms in the sum
\cite{guttel_schweitzer_21,pleiss_jankowiak_eriksson_damle_gardner_20}.
In the case conjugate gradient is used, the resulting algorithm is mathematically identical to Lanczos-FA used to compute the the rational approximation.
However, \cref{fig:sqrt_vs_rat} suggests that simply using Lanczos-FA on the original function is both simpler and converges faster (though memory usage and other factors may need to be considered).

Another line of work uses specialized iterative methods that exploit problem structure to apply the rational approximations \cite{frostig_musco_musco_sidford_16, allen_zhu_li_17,musco2019spectrum, jin_sidford_19}.
In \cref{fig:jin_sidford}, we compare Lanczos-FA to two such methods from \cite{jin_sidford_19} for computing $\sign(\vec{A})$, for $\vec{A}$ of the form $\vec{A} = \vec{B}^\T\vec{B} - \lambda \vec{I}$.
While they achieve better theoretical bounds than are known for Lanczos-FA, Lanczos-FA far outperforms them on the test problems used in \cite{jin_sidford_19}.

\begin{figure}
    \centering
    \includesvg[width=\textwidth]{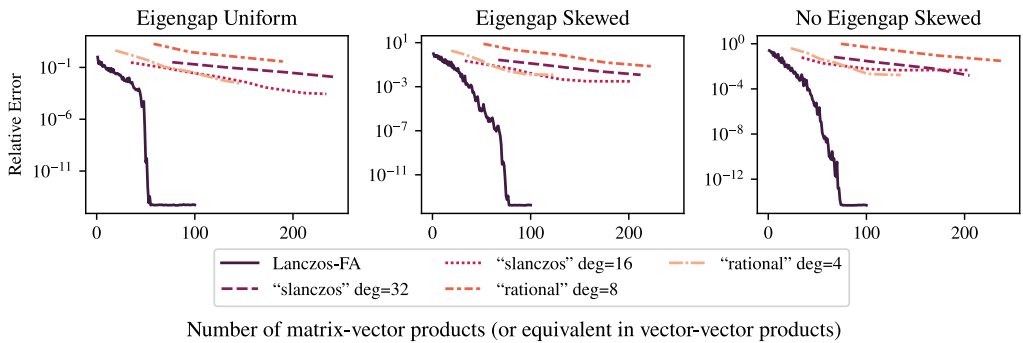}
    \caption{
    A comparison of Lanczos-FA with two methods from \cite{jin_sidford_19} (``rational'' and ``slanczos'') for computing the matrix sign function, which work by using a stochastic iterative method to approximate rational approximations to the step function of various degrees.
    The ``rational'' method is the main one studied in \cite{jin_sidford_19}, while ``slanczos'' is included because it is the best performing in their experiments.
    Each panel corresponds to one of the test problems from \cite{jin_sidford_19}.
    Iterations of these methods are counted in number of inner products with rows of $\vec A$ rather than number of matrix-vector products with $\vec A$ as a whole.
    To compare these with Lanczos-FA, we consider $d$ such inner products to be equivalent to one matrix-vector product.
    }
    \label{fig:jin_sidford}
\end{figure}

\paragraph{Additional Experiments.}

In \cref{sec:sqrt} we introduce bounds for the matrix square root and inverse square root, and in \cref{sec:exp_thm2} we provide numerical tests to study the sharpness of the bounds, verifying that they can improve on \cref{fact:lan_is_FOV}.
In \cref{sec:poles_inside}, we demonstrate the convergence behavior of Lanczos-FA on rational functions with poles inside the range of eigenvalues (\cref{fig:indefinite}).
This illustrates why a bound like \cref{thm:main} is not possible, but suggests a weaker bound, such as the bound in \cite{chen_meurant_24} for $f(x)=1/x$ and indefinite $\vec{A}$, may be possible.
\cref{sec:lanzcos_or_experiments} shows that, unlike Lanczos-FA, a related algorithm called Lanczos-OR \cite{chen_greenbaum_musco_musco_23} (which is exactly optimal for rational functions, though not in the Euclidean norm) can perform poorly on high degree rational functions when the error is measured in the Euclidean norm.

\section{Outlook}
\label{sec:outlook}

This paper provides instance-optimality guarantees for Lanczos-FA applied to a range of rational functions. 
We conclude with open questions that we believe are worthy of further study.

\vspace{-.5em}
\paragraph{Extension to other function classes.}
Empirically, Lanczos-FA seems to be nearly instance optimal for a wide variety of functions beyond those considered in this paper, such as rational functions with conjugate pairs of \emph{complex} poles whose real parts lie outside $[\lmin, \lmax]$.
As seen in \cref{fig:indefinite}, the error of Lanczos-FA on functions with real poles in $[\lmin, \lmax]$ is intriguing, oscillating between being very large and nearly optimal.
We discuss this more in \cref{sec:poles_inside}.
It would be valuable to provide bounds explaining these behaviors.

It would be also natural to try to extend our bounds to Stieltjes/Markov functions, which can be viewed as a certain type of infinite degree rational function approximations with poles in $(-\infty,0]$, and includes important functions like the inverse square root and a shifted logartihm.
Our bound \cref{thm:main} cannot be directly applied to this class due to the dependence on the rational function  denominator degree $q$.

\begin{figure}
    \centering
    \includesvg[width=0.8\textwidth]{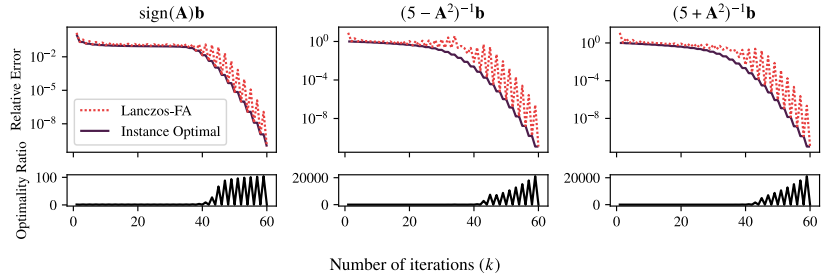}
    \caption{
    Convergence of Lanczos-FA for rational functions with poles in $\vec{A}$'s eigenvalue range or that are imaginary. The optimality ratio can be very large for some iterations.
     Similar behavior is seen for functions like $\sign(\vec{A})$ that have a discontinuity in the interval of $\vec{A}$'s eigenvalues. However, the ``overall'' convergence of Lanczos-FA still appears to closely track the instance-optimal solution.
    }
    \label{fig:indefinite}
\end{figure}

\vspace{-.5em}
\paragraph{Construction of hard instances / refined upper bounds.}
\Cref{thm:main} has an exponential dependence on the degree of the rational function's denominator $q$, which limits the practicality of our bounds.
It is unclear if and when this dependence can be improved.
% In fact, even for the task of approximating $\vec{A}^{-q}\vec{b}$ for positive definite $\vec{A}$, and despite having studied this particular problem for years, the authors still do not agree as to whether the optimality ratio of Lanczos-FA should have an exponential dependence on $q$.
The experiment in \cref{sec:rational_degree} provides strong evidence that some dependence on $q$ is necessary, but the hardest examples we have depend on $\sqrt{q}$, instead of the current bound of $\kappa(\vec{A})^q$ guaranteed by \cref{thm:main}.
It is an open question whether \cref{thm:main} can be tightened, or whether matching hard instances exist.

\vspace{-.5em}
\paragraph{Finite precision arithmetic.}
Our analysis concerns the behavior of the Lanczos algorithm when run in exact arithmetic. 
In practice, the implementation of the Lanczos algorithm is very important; for instance, practical implementations often output a $\vec{Q}$ which is far from orthogonal \cite{meurant_strakos_06,carson_liesen_strakos_24}.
While this instability can be mitigated with more expensive implementations, theoretical work shows that, surprisingly, Lanczos and Lanczos-FA can work well despite it \cite{paige_71,paige_76,paige_80,greenbaum_89,druskin_knizhnerman_91,druskin_greenbaum_knizhnerman_98}. 
For example, \cite{druskin_knizhnerman_91,musco_musco_sidford_18,chen_24} show that \cref{fact:lan_is_FOV} still holds up to a close approximation in finite precision arithmetic for any bounded matrix function.
It would be valuable to study whether stronger near-optimality guarantees like those proven in \cref{thm:main} are also robust to finite precision.
For $f(x) = 1/x$, this problem has been studied in \cite{greenbaum_89}.

\paragraph{Acknowledgments:}
This research was supported in part by NSF Awards 2045590 (Chen, Ch. Musco), 2046235 (Ca. Musco), 2427363 (Chen, Ca. Musco, Ch. Musco), and 2234660 (Amsel).
 
\printbibliography[]

@inproceedings{allen_zhu_li_17,
	author = {Zeyuan Allen-Zhu and Yuanzhi Li},
	booktitle = {Proceedings of the 34th International Conference on Machine Learning},
	pages = {107--115},
	title = {Faster Principal Component Regression and Stable Matrix {C}hebyshev Approximation},
	volume = {70},
	year = {2017},
}

@inproceedings{musco2019spectrum,
      title={Spectrum Approximation Beyond Fast Matrix Multiplication: Algorithms and Hardness}, 
      author={Cameron Musco and Praneeth Netrapalli and Aaron Sidford and Shashanka Ubaru and David P. Woodruff},
      year={2018},
      booktitle={9th Innovations in Theoretical Computer Science Conference (ITCS 2018)}
}

@article{beckermann_kuijlaars_01,
	author = {Beckermann, Bernhard and Kuijlaars, Arno B. J.},
	journal = {SIAM Journal on Numerical Analysis},
	number = {1},
	pages = {300-329},
	title = {Superlinear Convergence of Conjugate Gradients},
	volume = {39},
	year = {2001},
}

@inproceedings{braverman_krishnan_musco_22,
	author = {Vladimir Braverman and Aditya Krishnan and Christopher Musco},
	booktitle = {Proceedings of the 54th Annual {ACM} {SIGACT} Symposium on Theory of Computing (STOC 2022)},
	title = {Sublinear time spectral density estimation},
	year = {2022},
}

@article{chen_greenbaum_musco_musco_22,
	author = {Tyler Chen and Anne Greenbaum and Cameron Musco and Christopher Musco},
	date-modified = {2023-09-06 19:42:57 +0200},
	journal = {{SIAM} Journal on Matrix Analysis and Applications},
	number = {2},
	pages = {787--811},
	title = {Error Bounds for {L}anczos-Based Matrix Function Approximation},
	volume = {43},
	year = {2022}}

@article{chen_greenbaum_musco_musco_23,
	author = {Chen, Tyler and Greenbaum, Anne and Musco, Cameron and Musco, Christopher},
	journal = {{SIAM} Journal on Matrix Analysis and Applications},
	number = {2},
	pages = {670-692},
	title = {Low-Memory {K}rylov Subspace Methods for Optimal Rational Matrix Function Approximation},
	volume = {44},
	year = {2023}}

@inproceedings{chen_trogdon_ubaru_21,
	author = {Tyler Chen and Thomas Trogdon and Shashanka Ubaru},
	booktitle = {Proceedings of the 37th International Conference on Machine Learning (ICML 2021)},
	title = {Analysis of stochastic {L}anczos quadrature for spectrum approximation},
	year = {2021}}

@misc{chen_trogdon_ubaru_22,
      title={Randomized matrix-free quadrature for spectrum and spectral sum approximation}, 
      author={Tyler Chen and Thomas Trogdon and Shashanka Ubaru},
      year={2022},
      eprint={2204.01941},
      archivePrefix={arXiv},
      primaryClass={math.NA}
}

@article{cullum_greenbaum_96,
	author = {Cullum, Jane and Greenbaum, Anne},
	journal = {{SIAM} Journal on Matrix Analysis and Applications},
	number = {2},
	pages = {223-247},
	title = {Relations between {G}alerkin and Norm-Minimizing Iterative Methods for Solving Linear Systems},
	volume = {17},
	year = {1996},
}

@article{druskin_knizhnerman_91,
	author = {Druskin, Vladimir and Knizhnerman, Leonid},
	issue_date = {1991},
	journal = {Journal of Computational Mathematics and Mathematical Physics},
	number = {7},
	pages = {20--30},
	title = {Error Bounds in the Simple {L}anczos Procedure for Computing Functions of Symmetric Matrices and Eigenvalues},
	volume = {31},
	year = {1992}}

@article{druskin_greenbaum_knizhnerman_98,
  volume = {19},
  number = {1},
  pages = {38--54},
  author = {Vladimir Druskin and Anne Greenbaum and Leonid Knizhnerman},
  title = {Using Nonorthogonal {L}anczos Vectors in the Computation of Matrix Functions},
  journal = {{SIAM} Journal on Scientific Computing},
year = {1998}
}

@article{druskin_knizhnerman_89,
	author = {Druskin, Vladimir and Knizhnerman, Leonid},
	journal = {{USSR} Journal of Computational Mathematics and Mathematical Physics},
	number = {6},
	pages = {112--121},
	title = {Two polynomial methods of calculating functions of symmetric matrices},
	volume = {29},
	year = {1989},
}

@article{eshof_frommer_lippert_schilling_van_der_vorst_02,
	author = {Jasper van den Eshof and Andreas Frommer and Thomas Lippert and Klaus Schilling and 
Henk A. van der Vorst},
	journal = {Computer Physics Communications},
	number = {2},
	pages = {203 - 224},
	title = {Numerical methods for the {QCD} overlap operator. {I}. Sign-function and error bounds},
	volume = {146},
	year = {2002}
}

@article{frommer_guttel_schweitzer_14,
	author = {Andreas Frommer and Stefan G\"{u}ttel and Marcel Schweitzer},
	journal = {{SIAM} Journal on Matrix Analysis and Applications},
	number = {2},
	pages = {661--683},
	title = {Efficient and Stable {A}rnoldi Restarts for Matrix Functions Based on Quadrature},
	volume = {35},
	year = {2014},
}

@article{frommer_kahl_lippert_rittich_13,
	author = {Frommer, Andreas and Kahl, Karsten and Lippert, Th and Rittich, Hannah},
	doi = {10.1137/110859749},
	eprint = {https://doi.org/10.1137/110859749},
	journal = {SIAM Journal on Matrix Analysis and Applications},
	number = {3},
	pages = {1046-1065},
	title = {2-Norm Error Bounds and Estimates for {L}anczos Approximations to Linear Systems and Rational Matrix Functions},
	url = {https://doi.org/10.1137/110859749},
	volume = {34},
	year = {2013},
	bdsk-url-1 = {https://doi.org/10.1137/110859749}}

@inproceedings{frommer_simoncini_09,
	author = {Frommer, Andreas and Simoncini, Valeria},
	booktitle = {Numerical Validation in Current Hardware Architectures},
	isbn = {978-3-642-01591-5},
	pages = {203--216},
	title = {Error Bounds for {L}anczos Approximations of Rational Functions of Matrices},
	year = {2009}}

@article{frommer_schweitzer_15,
	author = {Andreas Frommer and Marcel Schweitzer},
	doi = {10.1007/s10543-015-0596-3},
	journal = {{BIT} Numerical Mathematics},
	month = dec,
	number = {3},
	pages = {865--892},
	publisher = {Springer Science and Business Media {LLC}},
	title = {Error bounds and estimates for {K}rylov subspace approximations of {S}tieltjes matrix functions},
	url = {https://doi.org/10.1007/s10543-015-0596-3},
	volume = {56},
	year = {2015},
	bdsk-url-1 = {https://doi.org/10.1007/s10543-015-0596-3}}

@inproceedings{frostig_musco_musco_sidford_16,
	author = {Frostig, Roy and Musco, Cameron and Musco, Christopher and Sidford, Aaron},
	booktitle = {Proceedings of the 33nd International Conference on Machine Learning (ICML 2016)},
	pages = {2349--2357},
	title = {Principal component projection without principal component analysis},
	year = {2016}}

@book{trefethen_19,
	author = {Lloyd N. Trefethen},
	doi = {10.1137/1.9781611975949},
	month = jan,
	publisher = {Society for Industrial and Applied Mathematics},
	title = {Approximation Theory and Approximation Practice, Extended Edition},
	url = {https://doi.org/10.1137/1.9781611975949},
	year = {2019},
	bdsk-url-1 = {https://doi.org/10.1137/1.9781611975949}}

@article{greenbaum_79,
	author = {Anne Greenbaum},
	day = {01},
	doi = {10.1007/BF01399553},
	issn0 = {0945-3245},
	journal = {Numerische Mathematik},
	month = {6},
	number = {2},
	pages = {181--193},
	title = {Comparison of splittings used with the conjugate gradient algorithm},
	url = {https://doi.org/10.1007/BF01399553},
	volume = {33},
	year = {1979},
	bdsk-url-1 = {https://doi.org/10.1007/BF01399553}}

@article{greenbaum_89,
	author = {Anne Greenbaum},
	doi = {https://doi.org/10.1016/0024-3795(89)90285-1},
	issn0 = {0024-3795},
	journal = {Linear Algebra and its Applications},
	pages = {7 - 63},
	title = {Behavior of slightly perturbed {L}anczos and conjugate-gradient recurrences},
	url = {http://www.sciencedirect.com/science/article/pii/0024379589902851},
	volume = {113},
	year = {1989},
	bdsk-url-1 = {http://www.sciencedirect.com/science/article/pii/0024379589902851},
	bdsk-url-2 = {https://doi.org/10.1016/0024-3795(89)90285-1}}

@book{greenbaum_97,
	address = {Philadelphia, PA, USA},
	author = {Anne Greenbaum},
	isbn0 = {0-89871-396-X},
	publisher = {Society for Industrial and Applied Mathematics},
	title = {Iterative Methods for Solving Linear Systems},
	year = {1997}}

@article{guttel_schweitzer_21,
	author = {Stefan G\"{u}ttel and Marcel Schweitzer},
	doi = {10.1137/20m1351072},
	journal = {{SIAM} Journal on Matrix Analysis and Applications},
	month = jan,
	number = {1},
	pages = {83--107},
	publisher = {Society for Industrial {\&} Applied Mathematics ({SIAM})},
	title = {A Comparison of Limited-memory {K}rylov Methods for {S}tieltjes Functions of {H}ermitian Matrices},
	url = {https://doi.org/10.1137/20m1351072},
	volume = {42},
	year = {2021},
	bdsk-url-1 = {https://doi.org/10.1137/20m1351072}}

@article{hale_higham_trefethen_08,
	author = {Hale, Nicholas and Higham, Nicholas J. and Trefethen, Lloyd N.},
	doi = {10.1137/070700607},
	eprint = {https://doi.org/10.1137/070700607},
	journal = {SIAM Journal on Numerical Analysis},
	number = {5},
	pages = {2505-2523},
	title = {Computing \({A}^\alpha, \log({A})\), and Related Matrix Functions by Contour Integrals},
	url = {https://doi.org/10.1137/070700607},
	volume = {46},
	year = {2008},
	bdsk-url-1 = {https://doi.org/10.1137/070700607}}

@article{ilic_turner_simpson_09,
	author = {Ilic, Marija D. and Turner, Ian W. and Simpson, Daniel P.},
	doi = {10.1093/imanum/drp003},
	journal = {{IMA} Journal of Numerical Analysis},
	month = jun,
	number = {4},
	pages = {1044--1061},
	publisher = {Oxford University Press ({OUP})},
	title = {A restarted {L}anczos approximation to functions of a symmetric matrix},
	url = {https://doi.org/10.1093/imanum/drp003},
	volume = {30},
	year = {2009},
	bdsk-url-1 = {https://doi.org/10.1093/imanum/drp003}}

@inproceedings{dong_eriksson_nickisch_bindel_wilson_17,
 author = {Dong, Kun and Eriksson, David and Nickisch, Hannes and Bindel, David and Wilson, Andrew G},
 booktitle = {Advances in Neural Information Processing Systems (NeurIPS 2017)},
 pages = {},
 title = {Scalable Log Determinants for Gaussian Process Kernel Learning},
 url = {https://proceedings.neurips.cc/paper_files/paper/2017/file/976abf49974d4686f87192efa0513ae0-Paper.pdf},
 volume = {30},
 year = {2017}
}

@article{aprahamian_higham_higham_15,
    author = {Aprahamian, Mary and Higham, Desmond J. and Higham, Nicholas J.},
    title = "{Matching exponential-based and resolvent-based centrality measures}",
    journal = {Journal of Complex Networks},
    volume = {4},
    number = {2},
    pages = {157-176},
    year = {2015},
    month = {06},
    issn = {2051-1310},
    doi = {10.1093/comnet/cnv016},
    url = {https://doi.org/10.1093/comnet/cnv016},
    eprint = {https://academic.oup.com/comnet/article-pdf/4/2/157/6716204/cnv016.pdf},
}

@inproceedings{jin_sidford_19,
 author = {Jin, Yujia and Sidford, Aaron},
 booktitle = {Advances in Neural Information Processing Systems (NeurIPS 2019)},
 pages = {},
 title = {Principal Component Projection and Regression in Nearly Linear Time through Asymmetric {SVRG}},
 volume = {32},
 year = {2019}
}

@article{frommer_kahl_schweitzer_tsolakis_23,
author = {Frommer, Andreas and Kahl, Karsten and Schweitzer, Marcel and Tsolakis, Manuel},
journal = {SIAM Journal on Matrix Analysis and Applications},
volume= {44},
number = {2},
pages = {693-717},
title = {{K}rylov Subspace Restarting for Matrix {L}aplace Transforms},
url = {https://epubs.siam.org/doi/abs/10.1137/22M1499674},
year = {2023}
}

@article{cundy_krieg_arnold_frommer_lippert_schilling_09,
  title={Numerical methods for the {QCD} overlap operator {IV}: Hybrid {M}onte {C}arlo},
  author={Cundy, Nigel and Krieg, Stefan and Arnold, Guido and Frommer, Andreas and Lippert, Th and Schilling, Klaus},
  journal={Computer physics communications},
  volume={180},
  number={1},
  pages={26--54},
  year={2009},
  publisher={Elsevier}
}

@book{liesen_strakos_13,
	author = {Liesen, J\"{o}rg and Strako{\v{s}}, Zden{\v{e}}k},
	collection = {Numerical mathematics and scientific computation},
	edition = {1st},
	isbn = {9780199655410},
	place = {Oxford},
	publisher = {Oxford University Press},
	series = {Numerical mathematics and scientific computation},
	title = {{K}rylov subspace methods: principles and analysis},
	year = {2013}}

@article{meurant_strakos_06,
	author = {Meurant, G{\'e}rard and Strako{\v{s}}, Zden{\v{e}}k},
	journal = {Acta Numerica},
	pages = {471--542},
	publisher = {Cambridge University Press},
	title = {The {L}anczos and conjugate gradient algorithms in finite precision arithmetic},
	volume = {15},
	year = {2006}}

@manual{mpmath,
	author = {Fredrik Johansson and others},
	key = {mpmath},
	month = {12},
	note = {{\tt http://mpmath.org/}},
	title = {mpmath: a {P}ython library for arbitrary-precision floating-point arithmetic (version 0.18)},
	year = {2013}}

@inproceedings{musco_musco_sidford_18,
	author = {Musco, Cameron and Musco, Christopher and Sidford, Aaron},
	booktitle = {Proceedings of the Twenty-Ninth Annual ACM-SIAM Symposium on Discrete Algorithms (SODA 2018)},
	pages = {1605--1624},
	title = {Stability of the {L}anczos Method for Matrix Function Approximation},
	year = {2018}}

@inproceedings{orecchia_sachdeva_nisheeth_12,
	author = {Lorenzo Orecchia and Sushant Sachdeva and Nisheeth K. Vishnoi},
	booktitle = {Proceedings of the 44th symposium on Theory of Computing (STOC 2012)},
	title = {Approximating the exponential, the {L}anczos method and an {\~{O}}(m)-time spectral algorithm for balanced separator},
	year = {2012},
}

@phdthesis{paige_71,
	author = {Paige, Christopher Conway},
	school = {University of London},
	title = {The computation of eigenvalues and eigenvectors of very large sparse matrices.},
	year = {1971}}

@article{paige_76,
	author = {Paige, Christopher Conway},
	doi = {10.1093/imamat/18.3.341},
	issn0 = {0272-4960},
	journal = {IMA Journal of Applied Mathematics},
	month = {12},
	number = {3},
	pages = {341-349},
	title = {{Error Analysis of the {L}anczos Algorithm for Tridiagonalizing a Symmetric Matrix}},
	url = {https://doi.org/10.1093/imamat/18.3.341},
	volume = {18},
	year = {1976},
	bdsk-url-1 = {https://doi.org/10.1093/imamat/18.3.341}}

@article{paige_80,
	author = {Paige, Christopher Conway},
	doi = {10.1016/0024-3795(80)90167-6},
	issn0 = {0024-3795},
	journal = {Linear Algebra and its Applications},
	pages = {235 - 258},
	title = {Accuracy and effectiveness of the {L}anczos algorithm for the symmetric eigenproblem},
	url = {http://www.sciencedirect.com/science/article/pii/0024379580901676},
	volume = {34},
	year = {1980},
	bdsk-url-1 = {http://www.sciencedirect.com/science/article/pii/0024379580901676},
	bdsk-url-2 = {https://doi.org/10.1016/0024-3795(80)90167-6}}

@inproceedings{pleiss_jankowiak_eriksson_damle_gardner_20,
author = {Pleiss, Geoff and Jankowiak, Martin and Eriksson, David and Damle, Anil and Gardner, Jacob R.},
title = {Fast matrix square roots with applications to gaussian processes and Bayesian optimization},
year = {2020},
booktitle = {Proceedings of the 34th International Conference on Neural Information Processing Systems (NeurIPS 2020)},
}

@article{aune_eidsvik_pokern_13,
  title={Iterative numerical methods for sampling from high dimensional Gaussian distributions},
  author={Aune, Erlend and Eidsvik, Jo and Pokern, Yvo},
  journal={Statistics and Computing},
  volume={23},
  pages={501--521},
  year={2013},
  publisher={Springer}
}

@article{aune_simpson_eidsvik_13,
  title={Parameter estimation in high dimensional Gaussian distributions},
  author={Aune, Erlend and Simpson, Daniel P and Eidsvik, Jo},
  journal={Statistics and Computing},
  volume={24},
  pages={247--263},
  year={2014},
  publisher={Springer}
}

@article{saad_92,
	author = {Saad, Yousef},
	doi = {10.1137/0729014},
	eprint = {https://doi.org/10.1137/0729014},
	journal = {SIAM Journal on Numerical Analysis},
	number = {1},
	pages = {209-228},
	title = {Analysis of Some {K}rylov Subspace Approximations to the Matrix Exponential Operator},
	url = {https://doi.org/10.1137/0729014},
	volume = {29},
	year = {1992},
	bdsk-url-1 = {https://doi.org/10.1137/0729014}}

@book{saad_03,
	author = {Yousef Saad},
	doi = {10.1137/1.9780898718003},
	month = jan,
	publisher = {Society for Industrial and Applied Mathematics},
	title = {Iterative Methods for Sparse Linear Systems},
	url = {https://doi.org/10.1137/1.9780898718003},
	year = {2003},
	bdsk-url-1 = {https://doi.org/10.1137/1.9780898718003}}

@article{jokar_mehri_05,
	author = {Sadegh Jokar and Bahman Mehri},
	doi = {10.1016/j.apnum.2005.02.005},
	journal = {Applied Numerical Mathematics},
	month = oct,
	number = {2},
	pages = {204--214},
	publisher = {Elsevier {BV}},
	title = {The best approximation of some rational functions in uniform norm},
	url = {https://doi.org/10.1016/j.apnum.2005.02.005},
	volume = {55},
	year = {2005},
	bdsk-url-1 = {https://doi.org/10.1016/j.apnum.2005.02.005}}

@article{carson_liesen_strakos_24,
title = {Towards understanding CG and GMRES through examples},
journal = {Linear Algebra and its Applications},
volume = {692},
pages = {241-291},
year = {2024},
issn = {0024-3795},
doi = {https://doi.org/10.1016/j.laa.2024.04.003},
url = {https://www.sciencedirect.com/science/article/pii/S0024379524001381},
author = {Erin Carson and Jörg Liesen and Zdeněk Strakoš},
keywords = {Krylov subspace methods, Method of moments, CG method, GMRES method, Iterative methods, Convergence analysis, Rounding error analysis, Polynomial approximation problems},
}

@book{rivlin_81,
	author = {Rivlin, Theodore J.},
	collection = {Dover books on advanced mathematics},
	place = {New York, NY},
	publisher = {Dover},
	series = {Dover books on advanced mathematics},
	title = {An introduction to the approximation of functions},
	year = {1981}}

@article{clemens_21,
	author = {Hofreither, Clemens},
	date-modified = {2023-09-06 21:10:45 +0200},
	journal = {Numerical Algorithms},
	month = {9},
	number = {1},
	pages = {365--388},
	publisher = {Springer-Verlag},
	title = {An Algorithm for Best Rational Approximation Based on Barycentric Rational Interpolation},
	volume = {88},
	year = {2021}}

@book{vershynin_2018,
	author = {Vershynin, Roman},
	collection = {Cambridge Series in Statistical and Probabilistic Mathematics},
	doi = {10.1017/9781108231596},
	place = {Cambridge},
	publisher = {Cambridge University Press},
	series = {Cambridge Series in Statistical and Probabilistic Mathematics},
	title = {High-Dimensional Probability: An Introduction with Applications in Data Science},
	year = {2018},
	bdsk-url-1 = {https://doi.org/10.1017/9781108231596}}

@article{saff_schonhage_76,
	Author = {Saff, E. B. and Sch{\"o}nhage, A. and Varga, R. S.},
	Da = {1975/09/01},
	Date-Added = {2023-09-29 01:55:13 -0400},
	Date-Modified = {2023-09-29 01:55:13 -0400},
	Doi = {10.1007/BF01399420},
	Id = {Saff1975},
	Journal = {Numerische Mathematik},
	Number = {3},
	Pages = {307--322},
	Title = {Geometric convergence to $e^{-z}$ by rational functions with real poles},
	Ty = {JOUR},
	Url = {https://doi.org/10.1007/BF01399420},
	Volume = {25},
	Year = {1975},
	Bdsk-Url-1 = {https://doi.org/10.1007/BF01399420}}

@inproceedings{aggarwal_alman_22,
  author = {Aggarwal,  Amol and Alman,  Josh},
  title = {Optimal-Degree Polynomial Approximations for Exponentials and Gaussian Kernel Density Estimation},
  year = {2022},
  booktitle = {37th Computational Complexity Conference (CCC 2022)},
}

@book{meurant_tebbens_20,
  title = {{K}rylov Methods for Nonsymmetric Linear Systems: From Theory to Computations},
  ISBN = {9783030552510},
  ISSN = {2198-3712},
  url = {http://dx.doi.org/10.1007/978-3-030-55251-0},
  DOI = {10.1007/978-3-030-55251-0},
  journal = {Springer Series in Computational Mathematics},
  publisher = {Springer International Publishing},
  author = {Meurant,  G\'{e}rard and Duintjer Tebbens,  Jurjen},
  year = {2020}
}

@misc{chen_24,
    title={The Lanczos algorithm for matrix functions: a handbook for scientists},
    author={Tyler Chen},
    year={2024},
    eprint={2410.11090},
    archivePrefix={arXiv},
    primaryClass={math.NA}
}

@article{chen_meurant_24,
      title={Near-optimal convergence of the full orthogonalization method}, 
      author={Tyler Chen and Gérard Meurant},
    journal={Electronic Transactions on Numerical Analysis},

  volume  = {60},
  year    = {2024},
  pages   = {421--427},
  doi     = {10.1553/etna_vol60s421},
      year={2024},
      eprint={2403.07259},
      archivePrefix={arXiv},
      primaryClass={math.NA}
}

@article{gallopoulos_saad_92,
  doi = {10.1137/0913071},
  url = {https://doi.org/10.1137/0913071},
  year = {1992},
  month = sep,
  publisher = {Society for Industrial {\&} Applied Mathematics ({SIAM})},
  volume = {13},
  number = {5},
  pages = {1236--1264},
  author = {E. Gallopoulos and Y. Saad},
  title = {Efficient Solution of Parabolic Equations by {K}rylov Approximation Methods},
  journal = {{SIAM} Journal on Scientific and Statistical Computing}
}

@misc{papyan_19,
      title={The Full Spectrum of Deepnet Hessians at Scale: Dynamics with SGD Training and Sample Size}, 
      author={Vardan Papyan},
      year={2019},
      eprint={1811.07062},
      archivePrefix={arXiv},
      primaryClass={cs.LG}
}

@InProceedings{ghorbani_krishnan_xiao_19,
  author        = {Ghorbani, Behrooz and Krishnan, Shankar and Xiao, Ying},
  booktitle     = {Proceedings of the 36th International Conference on Machine Learning (ICML 2019)},
  title         = {An Investigation into Neural Net Optimization via {H}essian Eigenvalue Density},
  year          = {2019},
  month         = {6},
  pages         = {2232--2241},
  volume        = {97},
}

@misc{yao_gholami_keutzer_mahoney_20,
      title={Py{H}essian: Neural Networks Through the Lens of the Hessian}, 
      author={Zhewei Yao and Amir Gholami and Kurt Keutzer and Michael Mahoney},
      year={2020},
      eprint={1912.07145},
      archivePrefix={arXiv},
      primaryClass={cs.LG}
}

@article{lanczos_50,
  title={An iteration method for the solution of the eigenvalue problem of linear differential and integral operators},
  author={Lanczos, Cornelius},
  year={1950},
  publisher={United States Governm. Press Office Los Angeles, CA},
  journal={Journal of Research of the National Bureau of Standards},
  volue={45},
  pages={255-282},
  doi={https://doi.org/10.6028/jres.045.026}
}

@inproceedings{dong_eriksson_nickish_bindel_wilson_17,
 author = {Dong, Kun and Eriksson, David and Nickisch, Hannes and Bindel, David and Wilson, Andrew G},
 booktitle = {Advances in Neural Information Processing Systems (NeurIPS 2017)},
 pages = {},
 title = {Scalable Log Determinants for Gaussian Process Kernel Learning},
 volume = {30},
 year = {2017}
}

@inproceedings{gardner_pleiss_weinberger_bindel_wilson_18,
 author = {Gardner, Jacob and Pleiss, Geoff and Weinberger, Kilian Q and Bindel, David and Wilson, Andrew G},
 booktitle = {Advances in Neural Information Processing Systems (NeurIPS 2018)},
 title = {GPyTorch: Blackbox Matrix-Matrix Gaussian Process Inference with GPU Acceleration},
 volume = {31},
 year = {2018}
}

@article{han_malioutov_avron_shin_17,
  title = {Approximating Spectral Sums of Large-Scale Matrices using Stochastic Chebyshev Approximations},
  volume = {39},
  ISSN = {1095-7197},
  url = {http://dx.doi.org/10.1137/16M1078148},
  DOI = {10.1137/16m1078148},
  number = {4},
  journal = {SIAM Journal on Scientific Computing},
  publisher = {Society for Industrial & Applied Mathematics (SIAM)},
  author = {Han,  Insu and Malioutov,  Dmitry and Avron,  Haim and Shin,  Jinwoo},
  year = {2017},
  month = jan,
  pages = {A1558–A1585}
}

@inproceedings{wang_sun_musco_bao_21,
author = {Wang, Sheng and Sun, Yuan and Musco, Christopher and Bao, Zhifeng},
title = {Public Transport Planning: When Transit Network Connectivity Meets Commuting Demand},
year = {2021},
booktitle = {Proceedings of the 2021 International Conference on Management of Data},
pages = {1906–1919},
series = {SIGMOD '21}
}

@article{massei_tudisco_24,
  title = {Optimizing network robustness via Krylov subspaces},
  volume = {58},
  ISSN = {2804-7214},
  url = {http://dx.doi.org/10.1051/m2an/2023102},
  DOI = {10.1051/m2an/2023102},
  number = {1},
  journal = {ESAIM: Mathematical Modelling and Numerical Analysis},
  publisher = {EDP Sciences},
  author = {Massei,  Stefano and Tudisco,  Francesco},
  year = {2024},
  month = jan,
  pages = {131–155}
}

@misc{tropp_webber_23,
      title={Randomized algorithms for low-rank matrix approximation: Design, analysis, and applications}, 
      author={Joel A. Tropp and Robert J. Webber},
      year={2023},
      eprint={2306.12418},
      archivePrefix={arXiv},
      primaryClass={math.NA}
}

@article {halko_martinsson_tropp_11,
    AUTHOR = {Halko, Nathan and Martinsson, Per-Gunnar and Tropp, Joel A.},
     TITLE = {Finding structure with randomness: probabilistic algorithms
              for constructing approximate matrix decompositions},
   JOURNAL = {SIAM Review},
  FJOURNAL = {SIAM Review},
    VOLUME = {53},
      YEAR = {2011},
    NUMBER = {2},
     PAGES = {217--288},
}

@article{jarrow_lando_turnbull_97,
  title={A Markov model for the term structure of credit risk spreads},
  author={Jarrow, Robert A and Lando, David and Turnbull, Stuart M},
  journal={The Review of Financial Studies},
  volume={10},
  number={2},
  pages={481--523},
  year={1997},
  publisher={Oxford University Press}
}

@article{waugh_abel_67,
  title={On fractional powers of a matrix},
  author={Waugh, Frederick V and Abel, Martin E},
  journal={Journal of the American Statistical Association},
  volume={62},
  number={319},
  pages={1018--1021},
  year={1967},
  publisher={Taylor \& Francis}
}

@article{janson_92,
  title={Measuring evolutionary constraints: a Markov model for phylogenetic transitions among seed dispersal syndromes},
  author={Janson, Charles H},
  journal={Evolution},
  volume={46},
  number={1},
  pages={136--158},
  year={1992},
  publisher={Wiley Online Library}
}

@article{dong_gong_yu_li_23,
  title = {Optimal Randomized Approximations for Matrix-Based Rényi’s Entropy},
  volume = {69},
  ISSN = {1557-9654},
  url = {http://dx.doi.org/10.1109/TIT.2023.3260122},
  DOI = {10.1109/tit.2023.3260122},
  number = {7},
  journal = {IEEE Transactions on Information Theory},
  publisher = {Institute of Electrical and Electronics Engineers (IEEE)},
  author = {Dong,  Yuxin and Gong,  Tieliang and Yu,  Shujian and Li,  Chen},
  year = {2023},
  month = jul,
  pages = {4218–4234}
}
% \bibliography{refs}
% \bibliographystyle{plainnat}

\clearpage

\appendix

\section{Proof of \texorpdfstring{\cref{thm:main}}{Theorem 4}}
\label{sec:main_proof}

\subsection{Notation}

We first introduce notation used throughout. Given a rational function $r(x) = n(x)/m(x)$ with numerator degree $p$ denominator degree $q$,
for $1 \leq i \leq j \leq q$, we define:
\[ m_{i,j}(x) := \prod_{k=i}^j (x-z_k). \]
We adopt the convention that $m_{j+1,j}(x) := 1$. 
Note that $m_{i,j}$ is a $(j-i+1)$-degree polynomial.
Define also 
\[ r_j(x) := n(x)/m_{1,j}(x). \]
Note that $m_{1,q} = m$, so $r_q = r(x)$. 
Recall that for any function $f(x)$, the Lanczos-FA approximation to $f(\vec{A})\vec{b}$ is $\lan{k}{f;\vec{A},\vec{b}} = \vec{Q} f(\vec{T}) \vec{Q}^\T \vec{b}$. Define
\[ 
    \err{k}{f}:= f(\vec{A})\vec{b} - \lan{k}{f;\vec{A},\vec{b}}.  
\]
Proving  \cref{thm:main} amounts to proving an upper bound on $\|\err{k}{r}\|_2$. Finally, for any symmetric positive definite matrix $\vec{M}$, define
\[
\opt{k}{f,\vec{M}; \bv{A},\bv{b}} := \operatornamewithlimits{argmin}_{\vec{x}\in\mathcal{K}_k(\vec{A},\vec{b})} \| f(\vec{A}) \vec{b} - \vec{x} \|_{\vec{M}}.
\]
For brevity, in the analysis below we will write $\lan{k}{f}$ and $\opt{k}{f,\vec{M}}$ in place of $\lan{k}{f; \vec A, \vec b}$ and $\opt{k}{f,\vec{M}; \vec A, \vec b}$, since $\vec A$ and $\vec b$ are fixed throughout the analysis.
\subsection{Simplifying \texorpdfstring{$\lan{k}{r_j}$ and $\opt{k}{r_j,\vec{A}_j}$}{lank(rj) and optk(rj;Aj)}}
We begin with a few standard results that will be useful for the proof of \cref{thm:main}.  
% More detailed proofs can be found throughout the literature.

\begin{lemma}[\cite{druskin_knizhnerman_89,saad_92}]
\label{thm:lanczos_poly_exact}
    For a polynomial $n(x)$, $\lan{k}{n} = n(\vec{A})\vec{b}$ if $k>\deg(n)$. 
\end{lemma}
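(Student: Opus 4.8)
The plan is to establish the standard \emph{moment-matching} identity for the Lanczos decomposition and then invoke linearity. Concretely, I would prove the intermediate claim that
\begin{equation*}
    \vec{Q}\vec{T}^{j}\vec{Q}^\T\vec{b} = \vec{A}^{j}\vec{b} \qquad \text{for all } 0 \le j \le k-1,
\end{equation*}
and then, writing $n(x) = \sum_{i=0}^{\deg(n)} c_i x^i$ with $\deg(n) \le k-1$, conclude by linearity of the map $g \mapsto \vec{Q}\,g(\vec{T})\,\vec{Q}^\T\vec{b}$ that $\lan{k}{n} = \sum_i c_i \vec{Q}\vec{T}^{i}\vec{Q}^\T\vec{b} = \sum_i c_i \vec{A}^{i}\vec{b} = n(\vec{A})\vec{b}$.

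The moment-matching claim itself I would prove by induction on $j$. The base case $j=0$ is immediate: since $\vec{b} \in \mathcal{K}_k(\vec{A},\vec{b}) = \operatorname{range}(\vec{Q})$, the matrix $\vec{Q}\vec{Q}^\T$ acts as the identity on $\vec{b}$. For the inductive step, I would use the Lanczos three-term recurrence in its matrix form $\vec{A}\vec{Q} = \vec{Q}\vec{T} + \beta_k \vec{q}_{k+1}\vec{e}_k^\T$, which gives $\vec{Q}\vec{T} = \vec{A}\vec{Q} - \beta_k \vec{q}_{k+1}\vec{e}_k^\T$. Then, assuming the identity holds for some $j \le k-2$,
\begin{equation*}
    \vec{Q}\vec{T}^{j+1}\vec{Q}^\T\vec{b} = (\vec{A}\vec{Q} - \beta_k \vec{q}_{k+1}\vec{e}_k^\T)\vec{T}^{j}\vec{Q}^\T\vec{b} = \vec{A}\,\vec{A}^{j}\vec{b} - \beta_k \vec{q}_{k+1}\bigl(\vec{e}_k^\T\vec{T}^{j}\vec{Q}^\T\vec{b}\bigr),
\end{equation*}
so it remains to check the correction term vanishes.

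The one point requiring care — and the only real obstacle — is showing $\vec{e}_k^\T\vec{T}^{j}\vec{Q}^\T\vec{b} = 0$ for $j \le k-2$. Here one uses that $\vec{Q}^\T\vec{b} = \|\vec{b}\|_2\,\vec{e}_1$ (because $\vec{q}_1 = \vec{b}/\|\vec{b}\|_2$, which follows from the nested-subspace property of $\vec{Q}$), so the term equals $\|\vec{b}\|_2\,(\vec{T}^{j})_{k,1}$. Since $\vec{T}$ is tridiagonal, $\vec{T}^{j}$ has bandwidth $j$, hence $(\vec{T}^{j})_{k,1} = 0$ whenever $j < k-1$, which is exactly the range in the induction. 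This completes the step, and the lemma follows. (Note the hypothesis $k > \deg(n)$ is used precisely to guarantee $\deg(n) \le k-1$, so that every power $\vec{T}^{i}$ appearing in $n(\vec{T})$ falls within the validity range of the moment-matching identity.)
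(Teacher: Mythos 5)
Your proof is correct, but it takes a genuinely different route from the paper's. The paper proves the key identity $\vec{Q}\vec{T}^j\vec{Q}^\T\vec{b} = \vec{A}^j\vec{b}$ (for $j<k$) by repeatedly inserting the orthogonal projector $\vec{Q}\vec{Q}^\T$: since $\vec{A}^i\vec{b}\in\mathcal{K}_k(\vec{A},\vec{b})$ for $i<k$, one may write $\vec{A}^j\vec{b} = \vec{Q}\vec{Q}^\T\vec{A}\cdot\vec{Q}\vec{Q}^\T\vec{A}\cdots\vec{Q}\vec{Q}^\T\vec{b}$ and then collapse each $\vec{Q}^\T\vec{A}\vec{Q}$ to $\vec{T}$. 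This argument uses nothing beyond $\vec{Q}$ being an orthonormal basis for the Krylov subspace and the definition $\vec{T}=\vec{Q}^\T\vec{A}\vec{Q}$. You instead prove the same identity by induction on $j$ using the three-term recurrence $\vec{A}\vec{Q}=\vec{Q}\vec{T}+\beta_k\vec{q}_{k+1}\vec{e}_k^\T$, and you kill the boundary correction by invoking two Lanczos-specific facts: that $\vec{Q}^\T\vec{b}=\|\vec{b}\|_2\vec{e}_1$ (via the nested-subspace property) and that $\vec{T}$ is tridiagonal, so that $(\vec{T}^j)_{k,1}=0$ for $j<k-1$. Both are valid; your version is a nice illustration of how the tridiagonal bandwidth limits information propagation across iterations, but it is heavier, requiring structural facts about the Lanczos factorization that the paper's projector-insertion argument does not need. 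The paper's proof would apply verbatim to any orthonormal Krylov basis, whereas yours is tied specifically to the Lanczos basis.
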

\begin{proof}
    By definition of the Krylov subspace, $\vec{A}^j\vec{b}\in\mathcal{K}_k(\vec{A},\vec{b})$ for all $j<k$. 
    Since $\vec{Q}\vec{Q}^\T$ is the 2-norm orthogonal projector onto $\mathcal{K}_k(\vec{A},\vec{b})$, then $\vec{Q}\vec{Q}^\T\vec{A}^j\vec{b} = \vec{A}^j\vec{b}$ for all $j<k$.
    Iteratively applying this fact,
    \[
    \vec{A}^{j}\vec{b}
    = \vec{Q}\vec{Q}^\T \vec{A}^j \vec{b}
    = \vec{Q}\vec{Q}^\T \vec{A} \vec{A}^{j-1} \vec{b}
    = \vec{Q}\vec{Q}^\T \vec{A} \vec{Q}\vec{Q}^\T \vec{A}^{j-1} \vec{b}
    = \vec{Q}\vec{Q}^\T \vec{A} \vec{Q} \cdots \vec{Q}^\T \vec{A} \vec{Q}\vec{Q}^\T \vec{b}.
    \]
    Using that $\vec{Q}^\T\vec{A}\vec{Q} = \vec{T}$, we find that $\vec{A}^j\vec{b} = \vec{Q}\vec{T}^j\vec{Q}^\T\vec{b}$, and thus, by linearity, $n(\vec{A})\vec{b} = \vec{Q} n(\vec{T}) \vec{Q}^\T \vec{b} = \lan{k}{n}$.
\end{proof}

\begin{lemma}\label{thm:opt_formula}
Let $\vec{A}_j$ be as in \cref{eq:aj}. Then, for any $k\geq 0$, $\opt{k}{r_j,\vec{A}_j} = \vec{Q} (\vec{T} - z_j\vec{I})^{-1}\vec{Q}^\T r_{j-1}(\vec{A}) \vec{b}$.   
\end{lemma}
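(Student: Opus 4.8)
The plan is to reduce the claim to the standard closed form for a Galerkin (energy-norm) projection. The first step is the elementary observation that consecutive $r_j$ differ by a single linear factor: since $m_{1,j}(x) = m_{1,j-1}(x)(x-z_j)$, we have $r_j(x) = r_{j-1}(x)/(x-z_j)$, hence
\[
    r_j(\vec{A})\vec{b} = (\vec{A}-z_j\vec{I})^{-1}r_{j-1}(\vec{A})\vec{b} = \pm\,\vec{A}_j^{-1}r_{j-1}(\vec{A})\vec{b},
\]
where the sign matches that used to define $\vec{A}_j$ in \cref{eq:aj} (i.e.\ $+$ when $z_j<\lmin$, $-$ when $z_j>\lmax$). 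So $\opt{k}{r_j}{\vec{A}_j}$ is nothing but the $\vec{A}_j$-norm best approximation out of $\mathcal{K}_k(\vec{A},\vec{b})$ to a vector of the form $\vec{A}_j^{-1}\vec{c}$, with $\vec{c} := \pm r_{j-1}(\vec{A})\vec{b}$.

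Second, I would record the general fact that for symmetric positive definite $\vec{M}$, the $\vec{M}$-norm best approximation to $\vec{M}^{-1}\vec{c}$ out of $\operatorname{range}(\vec{Q})$ is $\vec{Q}(\vec{Q}^\T\vec{M}\vec{Q})^{-1}\vec{Q}^\T\vec{c}$. This is immediate: parametrize the candidate as $\vec{Q}\vec{y}$, expand $\|\vec{M}^{-1}\vec{c}-\vec{Q}\vec{y}\|_{\vec{M}}^2$ as a convex quadratic in $\vec{y}$, and set its gradient to zero to obtain the normal equations $\vec{Q}^\T\vec{M}\vec{Q}\,\vec{y}=\vec{Q}^\T\vec{c}$; the coefficient matrix is invertible because $\vec{Q}$ has full column rank and $\vec{M}\succ0$.

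Third, I would substitute $\vec{M}=\vec{A}_j=\pm(\vec{A}-z_j\vec{I})$ and $\vec{c}=\pm r_{j-1}(\vec{A})\vec{b}$ into this formula. Then $\vec{Q}^\T\vec{M}\vec{Q}=\pm(\vec{T}-z_j\vec{I})$, and the two occurrences of the sign cancel, yielding exactly $\opt{k}{r_j}{\vec{A}_j}=\vec{Q}(\vec{T}-z_j\vec{I})^{-1}\vec{Q}^\T r_{j-1}(\vec{A})\vec{b}$, as desired.

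I do not anticipate a real obstacle; the only points needing care are the sign bookkeeping across the two cases of \cref{eq:aj} (which, as noted, cancel) and confirming that $\vec{T}-z_j\vec{I}$ is invertible. The latter follows either from positive-definiteness of $\vec{Q}^\T\vec{A}_j\vec{Q}$ or, equivalently, from Cauchy interlacing, which places the eigenvalues of $\vec{T}=\vec{Q}^\T\vec{A}\vec{Q}$ inside $\mathcal{I}$ while $z_j\notin\mathcal{I}$. For the cleanest write-up I might isolate the general $\vec{M}$-projection identity as a one-line fact and then simply apply it, so that the proof of the lemma itself is just the algebraic identity for $r_j(\vec{A})\vec{b}$ plus a substitution.
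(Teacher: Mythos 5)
Your proposal is correct and follows essentially the same route as the paper: both invoke the closed form for the $\vec{A}_j$-norm (Galerkin) projection onto $\operatorname{range}(\vec{Q})$ and then use the identities $\vec{Q}^\T\vec{A}_j\vec{Q}=\pm(\vec{T}-z_j\vec{I})$ and $\vec{A}_j r_j(\vec{A})=\pm r_{j-1}(\vec{A})$ so that the signs cancel. The only cosmetic difference is that you first rewrite the target as $\pm\vec{A}_j^{-1}r_{j-1}(\vec{A})\vec{b}$ and then apply the projection formula to a vector of the form $\vec{M}^{-1}\vec{c}$, whereas the paper applies the projector $\vec{Q}(\vec{Q}^\T\vec{A}_j\vec{Q})^{-1}\vec{Q}^\T\vec{A}_j$ directly to $r_j(\vec{A})\vec{b}$; the algebra is identical.
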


\begin{proof}
    The $\vec{A}_j$-norm projector onto $\mathcal{K}_k(\vec{A},\vec{b})$ is $\vec{Q}(\vec{Q}^\T \vec{A}_j\vec{Q})^{-1} \vec{Q}^\T \vec{A}_j$, so 
    \begin{align}
    \label{eq:opt_plug}
        \opt{k}{r_j,\vec{A}_j} = \vec{Q}(\vec{Q}^\T \vec{A}_j\vec{Q})^{-1} \vec{Q}^\T \vec{A}_jr_j(\vec{A}).
    \end{align}
    Recall that $\vec{A}_j = \pm(\vec{A}- z_j\vec{I})$.
    Since $\vec{Q}^\T \vec{A} \vec{Q} = \vec{T}$ and $\vec{Q}^\T \vec{Q} = \vec{I}$, we have $\vec{Q}^\T \vec{A}_j\vec{Q} = \pm(\vec{Q}^\T(\vec{A}- z_j\vec{I}) \vec{Q}) = \pm(\vec{T} - z_j \vec{I})$. Noting also that $\vec{A}_jr_j(\vec{A}) = \pm r_{j-1}(\vec{A})$ and plugging into \cref{eq:opt_plug} gives the result.
\end{proof}

\subsection{A telescoping sum for the error in terms of \texorpdfstring{$\opt{k}{r_j,\vec{A}_j}$}{optk(rj;Aj}}
Our first main result is a decomposition for the Lanczos-FA error.
\begin{lemma}
\label{thm:err_decomp}
Suppose $k>\deg(n)$. 
Then,
\[
    r(\vec{A})\vec{b} - \lan{k}{r}
    = \sum_{j=1}^{q} \Bigg[\prod_{i=j+1}^q \vec{Q}(\vec{T}-z_i \vec{I})^{-1}\vec{Q}^\T \Bigg] \big(r_j(\vec{A}) \vec{b} - \opt{k}{r_j,\vec{A}_j}\big).
\]
where we adopt the convention that $\prod_{i=q+1}^q \vec{B}_i = \vec{I}$ for any set of matrices $\{\vec{B}_i\}$.
\end{lemma}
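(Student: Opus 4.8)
The plan is to expand $r(\vec{A})\vec{b} - \lan{k}{r}$ as a telescoping sum running over the intermediate rational functions $r_0, r_1, \dots, r_q$, where $r_0 = n$ is a polynomial and $r_q = r$. For $0 \le j \le q$, define
\[
    \vec{w}_j := \Bigg[\prod_{i=j+1}^{q} \vec{Q}(\vec{T}-z_i\vec{I})^{-1}\vec{Q}^\T\Bigg]\, r_j(\vec{A})\vec{b}.
\]
The right-hand side of the lemma is then precisely $\sum_{j=1}^q (\vec{w}_j - \vec{w}_{j-1})$, provided we can show (i) $\vec{w}_q = r(\vec{A})\vec{b}$, (ii) $\vec{w}_0 = \lan{k}{r}$, and (iii) $\vec{w}_j - \vec{w}_{j-1} = \big[\prod_{i=j+1}^q \vec{Q}(\vec{T}-z_i\vec{I})^{-1}\vec{Q}^\T\big]\big(r_j(\vec{A})\vec{b} - \opt{k}{r_j}{\vec{A}_j}\big)$ for each $1 \le j \le q$. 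Granting these, the identity follows by collapsing the telescope.

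Facts (i) and (ii) are essentially bookkeeping. Fact (i) holds because the product defining $\vec{w}_q$ is empty, so $\vec{w}_q = r_q(\vec{A})\vec{b} = r(\vec{A})\vec{b}$. For (ii), I first note that $\vec{T} = \vec{Q}^\T\vec{A}\vec{Q}$ has all its eigenvalues in $\mathcal{I} = [\lmin,\lmax]$ by the Rayleigh-quotient characterization of eigenvalues, so each $\vec{T} - z_i\vec{I}$ is invertible (since $z_i \notin \mathcal{I}$) and $r(\vec{T})$ is well defined; in particular $\lan{k}{r} = \vec{Q}\,r(\vec{T})\,\vec{Q}^\T\vec{b}$. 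Since $k > \deg(n)$, \cref{thm:lanczos_poly_exact} gives $n(\vec{A})\vec{b} = \vec{Q}\,n(\vec{T})\,\vec{Q}^\T\vec{b}$, and then repeatedly using $\vec{Q}^\T\vec{Q} = \vec{I}$ to merge the interior factors collapses $\vec{w}_0$ to $\vec{Q}\big[\prod_{i=1}^q (\vec{T}-z_i\vec{I})^{-1}\big] n(\vec{T})\,\vec{Q}^\T\vec{b} = \vec{Q}\,r(\vec{T})\,\vec{Q}^\T\vec{b} = \lan{k}{r}$.

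Fact (iii) is the heart of the argument. The matrices $\vec{Q}(\vec{T}-z_i\vec{I})^{-1}\vec{Q}^\T$ pairwise commute: by $\vec{Q}^\T\vec{Q} = \vec{I}$, the product of any two of them equals $\vec{Q}(\vec{T}-z_i\vec{I})^{-1}(\vec{T}-z_{i'}\vec{I})^{-1}\vec{Q}^\T$, which is symmetric in $i$ and $i'$. Hence the ordered products above are unambiguous, and in $\vec{w}_{j-1}$ I may move the $i=j$ factor next to the vector $r_{j-1}(\vec{A})\vec{b}$ it acts on. Because $r_{j-1}(x) = (x-z_j)\,r_j(x)$, \cref{thm:opt_formula} identifies $\vec{Q}(\vec{T}-z_j\vec{I})^{-1}\vec{Q}^\T r_{j-1}(\vec{A})\vec{b} = \opt{k}{r_j}{\vec{A}_j}$, so $\vec{w}_{j-1} = \big[\prod_{i=j+1}^q \vec{Q}(\vec{T}-z_i\vec{I})^{-1}\vec{Q}^\T\big]\opt{k}{r_j}{\vec{A}_j}$; subtracting this from $\vec{w}_j$ yields (iii). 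There is no analytic content here --- everything reduces to operator algebra --- so I expect the only (minor) obstacle to be presenting the commutativity and $\vec{Q}^\T\vec{Q} = \vec{I}$ manipulations cleanly enough that the telescoping structure is transparent.
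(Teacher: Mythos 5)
Your proof is correct and takes essentially the same approach as the paper: both arguments establish the same telescoping decomposition via \cref{thm:opt_formula} and \cref{thm:lanczos_poly_exact}, the only difference being that the paper proves the recursion $\err{k}{r_j} = \big(r_j(\vec{A})\vec{b} - \opt{k}{r_j}{\vec{A}_j}\big) + \vec{Q}(\vec{T}-z_j\vec{I})^{-1}\vec{Q}^\T\,\err{k}{r_{j-1}}$ and unwinds it from the base case $\opt{k}{r_1}{\vec{A}_1} = \lan{k}{r_1}$, whereas you define the partial terms $\vec{w}_j$ explicitly and collapse the telescope directly. Both are valid presentations of the same calculation, and your commutativity observation is a clean way to justify relocating the $i=j$ factor.
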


\begin{proof}
We can decompose $\err{k}{r_j}$ as 
\begin{align}
    \err{k}{r_j}
    &= r_j(\vec{A}) \vec{b}  - \vec{Q} r_j(\vec{T})\vec{Q}^\T \vec{b} \nonumber \\
    &= \left[r_j(\vec{A}) \vec{b} - \opt{k}{r_j,\vec{A}_j} \right] + \left[\opt{k}{r_j,\vec{A}_j} - \vec{Q} r_j(\vec{T})\vec{Q}^\T \vec{b} \right]. \label{eqn:err_triangle}
\end{align}
Focusing on the second term and using \cref{thm:opt_formula},
\begin{align*}
\hspace{5em}&\hspace{-5em}\opt{k}{r_j,\vec{A}_j} - \vec{Q} r_j(\vec{T})\vec{Q}^\T \vec{b} 
\\
&= \vec{Q} (\vec{T} - z_j\vec{I})^{-1}\vec{Q}^\T r_{j-1}(\vec{A}) \vec{b} - \vec{Q} (\vec{T} - z_j\vec{I})^{-1} r_{j-1}(\vec{T}) \vec{Q}^\T \vec{b} \\
&= \vec{Q} (\vec{T} - z_j\vec{I})^{-1}\vec{Q}^\T \left[r_{j-1}(\vec{A}) \vec{b} - \vec{Q} r_{j-1}(\vec{T})\vec{Q}^\T \vec{b}\right] \\
&= \vec{Q} (\vec{T} - z_j\vec{I})^{-1}\vec{Q}^\T \left[\err{k}{r_{j-1}}\right].
\end{align*}
Substituting this into \cref{eqn:err_triangle}, we have 
\begin{equation}
\label{eqn:err_decomp}
\err{k}{r_j} = \left[r_j(\vec{A}) \vec{b} - \opt{k}{r_j,\vec{A}_j}\right] + \vec{Q} (\vec{T} - z_j\vec{I})^{-1}\vec{Q}^\T \err{k}{r_{j-1}}.
\end{equation}
Next, notice that $\opt{k}{r_1,\vec{A}_1}$ is exactly the Lanczos approximation to $r_1(\vec{A}) \vec{b}$.
Indeed, since $\vec{Q}^\T \vec{Q} = \vec{I}$, \cref{thm:lanczos_poly_exact,thm:opt_formula} imply
\begin{align*}
\lan{k}{r_1} 
&= \vec{Q} (\vec{T} - z_i\vec{I})^{-1} n(\vec{T})\vec{Q}^\T \vec{b}
\\&= \vec{Q} (\vec{T} - z_i\vec{I})^{-1} \vec{Q}^\T \vec{Q} n(\vec{T}) \vec{Q}^\T \vec{b}
\\&=\vec{Q}(\vec{T} - z_1\vec{I})^{-1}\vec{Q}^\T n(\vec{A})\vec{b}
= \opt{k}{r_1,\vec{A}_1}.
\end{align*}
Using this as a base case, we can repeatedly apply our decomposition of $\err{k}{r_j}$ in \cref{eqn:err_decomp} to obtain
\begin{align*}
    \err{k}{r}
    = \err{k}{r_q}
    = \sum_{j=1}^{q} \left[\prod_{i=j+1}^q \vec{Q}(\vec{T}-z_i \vec{I})^{-1}\vec{Q}^\T \right] \left(r_j(\vec{A}) \vec{b} - \opt{k}{r_j,\vec{A}_j}\right).
\end{align*}
\end{proof}

\subsection{Bounding each term in the telescoping sum and combining}
The following lemma allows us to relate the optimality of the functions $r_j$ as measured in the $\vec{A}_j$-norm to that of $r_q = r$ as measured in the 2-norm.
\begin{lemma}
\label{thm:rj_opt_bound}
    For any $k>q-j$,
    \begin{align*}
        \hspace{5em}&\hspace{-5em}\| r_j(\vec{A}) \vec{b} - \opt{k}{r_j,\vec{A}_j} \|\\
        &\leq \kappa(\vec{A}_j)^{1/2} \cdot \|m_{j+1,q}(\vec{A})\|_2 \min_{\mathrm{deg}(p)<k-(q-j)}\| r(\vec{A})\vec{b} - p(\vec{A}) \vec{b} \|_2.
    \end{align*}
\end{lemma}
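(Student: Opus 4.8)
The plan is to bound $\|r_j(\vec{A})\vec{b} - \opt{k}{r_j}{\vec{A}_j}\|_2$ by first passing from the $2$-norm to the $\vec{A}_j$-norm error (losing $\kappa(\vec{A}_j)^{1/2}$), then using optimality of $\opt{k}{r_j}{\vec{A}_j}$ in the $\vec{A}_j$-norm to replace it by \emph{any} convenient competitor from $\mathcal{K}_k(\vec{A},\vec{b})$, and finally choosing that competitor so as to reduce polynomial approximation of $r_j$ to polynomial approximation of $r = r_q$.

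First I would record the elementary norm-equivalence: since $\vec{A}_j$ is symmetric positive definite, $\|\vec{x}\|_2 \leq \lambda_{\min}(\vec{A}_j)^{-1/2}\|\vec{x}\|_{\vec{A}_j}$ and $\|\vec{x}\|_{\vec{A}_j} \leq \lambda_{\max}(\vec{A}_j)^{1/2}\|\vec{x}\|_2$, so for any $\vec{y}$, $\|\vec{x}\|_2 \leq \kappa(\vec{A}_j)^{1/2}\,\|\vec{x} - \vec{y}\|_2$ is \emph{not} what we want; rather, for the optimal iterate we get
\[
\|r_j(\vec{A})\vec{b} - \opt{k}{r_j}{\vec{A}_j}\|_2 \leq \lambda_{\min}(\vec{A}_j)^{-1/2}\,\|r_j(\vec{A})\vec{b} - \opt{k}{r_j}{\vec{A}_j}\|_{\vec{A}_j}.
\]
By the defining optimality of $\opt{k}{r_j}{\vec{A}_j}$ over $\mathcal{K}_k(\vec{A},\vec{b})$ in the $\vec{A}_j$-norm, the right side is at most $\lambda_{\min}(\vec{A}_j)^{-1/2}\,\|r_j(\vec{A})\vec{b} - \vec{x}\|_{\vec{A}_j}$ for any $\vec{x}\in\mathcal{K}_k(\vec{A},\vec{b})$, and bounding that $\vec{A}_j$-norm by $\lambda_{\max}(\vec{A}_j)^{1/2}\|\cdot\|_2$ yields $\kappa(\vec{A}_j)^{1/2}\,\|r_j(\vec{A})\vec{b} - \vec{x}\|_2$.

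Now I would choose the competitor. Let $p$ be any polynomial with $\deg(p) < k-(q-j)$, and set $\vec{x} := m_{j+1,q}(\vec{A})\,p(\vec{A})\vec{b}$. Since $\deg(m_{j+1,q}) = q-j$, we have $\deg(m_{j+1,q}\cdot p) < k$, so $\vec{x}\in\mathcal{K}_k(\vec{A},\vec{b})$, which is the key degree-counting check. The crucial algebraic identity is $r_j(x) = m_{j+1,q}(x)\,r(x)$, which follows because $r(x) = n(x)/m_{1,q}(x)$ and $r_j(x) = n(x)/m_{1,j}(x)$, and $m_{1,q} = m_{1,j}\cdot m_{j+1,q}$. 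Hence
\[
r_j(\vec{A})\vec{b} - \vec{x} = m_{j+1,q}(\vec{A})\big(r(\vec{A})\vec{b} - p(\vec{A})\vec{b}\big),
\]
so $\|r_j(\vec{A})\vec{b} - \vec{x}\|_2 \leq \|m_{j+1,q}(\vec{A})\|_2\,\|r(\vec{A})\vec{b} - p(\vec{A})\vec{b}\|_2$ by submultiplicativity. Combining the two displays and taking the infimum over all admissible $p$ gives exactly the claimed bound (writing $\lambda_{\min}(\vec{A}_j)^{-1/2}\lambda_{\max}(\vec{A}_j)^{1/2} = \kappa(\vec{A}_j)^{1/2}$).

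I expect no single step to be a serious obstacle here — the lemma is essentially a bookkeeping argument once the right competitor $\vec{x} = m_{j+1,q}(\vec{A})p(\vec{A})\vec{b}$ is identified. The only points requiring care are (i) checking the degree bound $\deg(m_{j+1,q}\cdot p) < k$ so that $\vec{x}$ genuinely lies in $\mathcal{K}_k(\vec{A},\vec{b})$, which forces the shift from $\deg(p)<k$ to $\deg(p)<k-(q-j)$ in the final minimum, and (ii) making sure the norm-equivalence constants are applied on the correct side of the optimality inequality so the product of constants collapses to $\kappa(\vec{A}_j)^{1/2}$ rather than $\kappa(\vec{A}_j)$. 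Both are routine, so the main ``content'' of the lemma is really just the factorization $r_j = m_{j+1,q}\cdot r$, which is what makes the telescoping decomposition of \cref{thm:err_decomp} pay off.
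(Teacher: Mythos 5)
Your proposal is correct and follows essentially the same route as the paper's proof: pass from the $2$-norm to the $\vec{A}_j$-norm, invoke the $\vec{A}_j$-optimality of $\opt{k}{r_j}{\vec{A}_j}$, pass back to the $2$-norm to collect $\kappa(\vec{A}_j)^{1/2}$, and then exploit the factorization $r_j = m_{j+1,q}\cdot r$ with a degree-shifted competitor. The only difference is organizational: the paper first establishes $\|r_j(\vec{A})\vec{b} - \opt{k}{r_j}{\vec{A}_j}\|_2 \leq \kappa(\vec{A}_j)^{1/2}\min_{\deg(p)<k}\|r_j(\vec{A})\vec{b} - p(\vec{A})\vec{b}\|_2$ as a standalone step and then separately relates that minimum to the one for $r$, whereas you plug in the competitor $m_{j+1,q}(\vec{A})p(\vec{A})\vec{b}$ in one go; the mathematical content is identical.
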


\begin{proof}
    Recall that $\opt{k}{r_j,\vec{A}_j}$ is the optimal approximation to $r_j(\vec{A}) \vec{b}$ in the $\vec{A}_j$-norm, so can be related to the optimal approximation in the $2$-norm.
\begin{align*}
    \|r_j(\vec{A}) \vec{b} - \opt{k}{r_j,\vec{A}_j}\|_2
    &= \left\|\vec{A}_j^{-1/2} \vec{A}_j^{1/2}(r_j(\vec{A}) \vec{b} - \opt{k}{r_j,\vec{A}_j})\right\|_2 \\
    &\leq \|\vec{A}_j^{-1/2}\|_2 \cdot \| r_j(\vec{A}) \vec{b} - \opt{k}{r_j,\vec{A}_j}\|_{\vec{A}_j} \\
    &= \|\vec{A}_j^{-1}\|_2^{1/2} \min_{\mathrm{deg}(p)<k}\|r_j(\vec{A}) \vec{b} - p(\vec{A}) \vec{b}\|_{\vec{A}_j} \\
    &\leq \|\vec{A}_j^{-1}\|_2^{1/2} \cdot \|\vec{A}_j\|_2^{1/2} \min_{\mathrm{deg}(p)<k}\|r_j(\vec{A}) \vec{b} - p(\vec{A}) \vec{b} \|_2 \\
    &= \kappa(\vec{A}_j)^{1/2} \min_{\mathrm{deg}(p)<k}\|r_j(\vec{A}) \vec{b} - p(\vec{A}) \vec{b} \|_2.
\end{align*}
We now relate the error of approximating $r_j(\vec{A}) \vec{b}$ to that of approximating $r_q(\vec{A}) \vec{b} = r(\vec{A})\vec{b}$:
\begin{align*}
\hspace{5em}&\hspace{-5em}
    \min_{\mathrm{deg}(p)<k}\| r_j(\vec{A}) \vec{b} - p(\vec{A}) \vec{b} \|_2 \\
    &= \min_{\mathrm{deg}(p)<k}\| m_{j+1,q}(\vec{A})r_q(\vec{A}) \vec{b} - p(\vec{A}) \vec{b} \|_2 \\
    &\leq \min_{\mathrm{deg}(p)<k-(q-j)}\| m_{j+1,q}(\vec{A})r_q(\vec{A}) \vec{b} - m_{j+1,q}(\vec{A})p(\vec{A}) \vec{b} \|_2 \\
    &\leq \|m_{j+1,q}(\vec{A})\|_2 \min_{\mathrm{deg}(p)<k-(q-j)}\| r(\vec{A})\vec{b} - p(\vec{A}) \vec{b} \|_2.
\end{align*}
Combining these two steps proves the lemma.
\end{proof}

With the above results in place, we can prove \cref{thm:main}. We will apply the triangle inequality to the telescoping sum for $\err{k}{r}$ (\cref{thm:err_decomp}) and bound each term by the error of the optimal approximation to $r(\vec{A})\vec{b}$ (\cref{thm:rj_opt_bound}).
\begin{proof}[Proof of \cref{thm:main}]
Focus on a single term in the sum in \cref{thm:err_decomp}. First we will get rid of $\vec{Q}$ and $\vec{Q^\T}$. For $1 \leq j <q$, using that $\vec{Q}^\T \vec{Q} = \vec{I}$ we have that
\begin{align}   
\label{eq:removed_q}
    \Bigg\| \prod_{i=j+1}^{q} \vec{Q}(\vec{T}-z_i \vec{I})^{-1}\vec{Q}^\T  \Bigg\|_2
    &= \| \vec{Q} m_{j+1,q}(\vec{T})^{-1}\vec{Q}^\T  \|_2
    \leq \| m_{j+1,q}(\vec{T})^{-1} \|_2 .
\end{align}
Also note that by the convention adopted in \cref{thm:err_decomp}, for $j=q$, $\prod_{i=j+1}^{q} \vec{Q}(\vec{T}-z_k \vec{I})^{-1}\vec{Q}^\T = \vec{I} = m_{j+1,q}(\vec{T})$.
% Suppose $p$ is a real-coefficient polynomial with real roots. WLOG assume
% \[
% p(x) = (x-z_1)^{\alpha_1} \cdots (x-z_d)^{\alpha_d}
% \]
% where $z_i$ are distinct and $\alpha_i > 0$.
% Using the product rule we can write
% \[
%     p'(x) = (x-z_1)^{\alpha_1-1} \cdots (x-z_d)^{\alpha_d-1} q(x)
% ,\qquad \deg(q) = d-1.
% \]
% For each $i=1,2,\ldots, d-1$, by Rolle's theorem there is a $c_i\in(z_i,z_{i+1})$ with $p'(c_i)=0$.
% This must be a zero of $q(x)$, and there are $d-1$ such zeros.
% Since $q(x)$ is a polynomial of at most degree $d-1$, these are all of the zeros of $q(x)$.
% Thus, between consecutive distinct zeros of $p$, there is exactly one zero of $p'$.
Next we claim that 
\begin{align}
\label{eq:next_step}
\| m_{j+1,q}(\vec{T})^{-1} \|_2 \leq \| m_{j+1,q}(\vec{A})^{-1} \|_2.
\end{align}
To see why this is the case, note that because $\vec{T} = \vec{Q}^\T \vec{A} \vec{Q}$, the eigenvalues of $\vec{T}$ are contained in $\mathcal{I} = [\lmin(\vec{A}),\lmax(\vec{A})]$.
By assumption, the roots of $m_{j+1,q}(x)$ are all real and lie outside of $\mathcal{I}$. Since there is at most one critical point between distinct roots, there can only be one critical point of $m_{j+1,q}(x)$ in $\mathcal{I}$. Thus, there can be no local minima of $|m_{j+1,q}(x)|$ in the interior of $\mathcal{I}$. Rather, the minimum of $|m_{j+1,q}(x)|$ over $\mathcal{I}$ must be attained at the boundary; i.e. at $x=\lmin$ or $x=\lmax$.

We now apply the triangle inequality to the telescoping sum of \cref{thm:err_decomp}:
\[
    \left\| \err{k}{r} \right\|_2
    \leq \sum_{j=1}^{q} \Bigg\| \prod_{i=j+1}^q \vec{Q}(\vec{T}-z_i \vec{I})^{-1}\vec{Q}^\T \Bigg\|_2 \cdot \big\| r_j(\vec{A}) \vec{b} - \opt{k}{r_j,\vec{A}_j}\big\|_2.
\]
Applying \cref{eq:removed_q} and \cref{eq:next_step} we then have
\[
    \left\| \err{k}{r} \right\|_2
    \leq \sum_{j=1}^{q}\| m_{j+1,q}(\vec{A})^{-1} \|_2 \cdot \big\| r_j(\vec{A}) \vec{b} - \opt{k}{r_j,\vec{A}_j}\big\|_2.
\]
Finally, using \cref{thm:rj_opt_bound} we find
    %&\leq \sum_{j=1}^{q}  \| m_{j+1,q}(\vec{A})^{-1} \|_2 \cdot\kappa(\vec{A}_j)^{1/2} \cdot \|m_{j+1,q}(\vec{A})\|_2 \min_{\mathrm{deg}(p)<k-(q-j)}\|p(\vec{A}) \vec{b} - r(\vec{A})\vec{b}\|_2
    %\\
    %&=
    \begin{align}
    \label{eq:penultimate}
    \left\| \err{k}{r} \right\|_2
    \leq \sum_{j=1}^{q}  \kappa(\vec{A}_j)^{1/2} \cdot \kappa( m_{j+1,q}(\vec{A})) \min_{\mathrm{deg}(p)<k-(q-j)}\|p(\vec{A}) \vec{b} - r(\vec{A})\vec{b}\|_2.
    \end{align}
We can simplify by noting that
\[
\min_{\mathrm{deg}(p)<k-(q-j)}\|p(\vec{A}) \vec{b} - r(\vec{A})\vec{b}\|_2
\leq \min_{\mathrm{deg}(p)<k-(q-1)}\|p(\vec{A}) \vec{b} - r(\vec{A})\vec{b}\|_2,
\]
and we can combine all the condition number factors using
\[  
    \sum_{j=1}^{q}  \kappa(\vec{A}_j)^{1/2} \cdot \kappa( m_{j+1,q}(\vec{A}))
    \leq \sum_{j=1}^{q}  \kappa(\vec{A}_j) \prod_{i=j+1}^{q}\kappa(\vec{A}_i)
    \leq q \prod_{i=1}^{q}\kappa(\vec{A}_i).
\]
The result follows by plugging into \eqref{eq:penultimate}.
\end{proof}

\subsection{Proof of \texorpdfstring{\cref{lem:triangle}}{Lemma 5}}
\label{sec:triangle_proof}
We can bound the error of Lanczos-FA on $f(x)$ using triangle inequality as follows:
\vspace{1em}
\begin{align}
\label{eqn:rational_unif}
    \| f(\vec{A})\vec{b} - \lan{k}{f;\vec{A},\vec{b}} \|_2 
    \leq \| f(\vec{A})\vec{b} - r(\vec{A})\vec{b} \|_2 &+ \| r(\vec{A})\vec{b} - \lan{k}{r;\vec{A},\vec{b}} \|_2 \nonumber\\
&+\| \lan{k}{r;\vec{A},\vec{b}} - \lan{k}{f;\vec{A},\vec{b}} \|_2. 
\end{align}
The first and third terms of \cref{eqn:rational_unif} are controlled by the maximum error of approximating $f(x)$ with $r(x)$ over $\mathcal I$. Specifically, if we let $\|r - f\|_{\mathcal I}:= \max_{x \in \mathcal I}|r(x) - f(x)|$,
\begin{equation*}
    \| f(\vec{A})\vec{b} - r(\vec{A})\vec{b} \|_2 \leq \|\vec b\|_2 \cdot \|f(\vec{A}) - r(\vec{A})\|_2 \leq \|\vec b\|_2\cdot \|r - f\|_{\mathcal I}
\end{equation*}
and similarly, using that $\Lambda(\vec{T}) \subset \mathcal{I}$,
\begin{equation*}
    \| \lan{k}{r;\vec{A},\vec{b}} - \lan{k}{f;\vec{A},\vec{b}} \|_2
    = \| \vec{Q}r(\vec T)\vec{Q^\T b} - \vec{Q}f(\vec T)\vec{Q^\T b} \|_2 \\
    %&\leq \|r(\vec T) - f(\vec T)\|_2 \cdot \|\vec b\|_2 \\
    \leq \|\vec b\|_2\cdot \|r - f\|_{\mathcal I}.
\end{equation*}

The second term of \cref{eqn:rational_unif} can controlled using \cref{eqn:hypothetical_instance_opt} and a triangle inequality:
\begin{align*}
    \| r(\vec{A})\vec{b} - \lan{k}{r;\vec{A},\vec{b}} \|_2
    &\leq C_r \min_{\deg(p)<c_r k}\| r(\vec{A})\vec{b} - p(\vec{A})\vec{b} \|_2 \\
    &\leq C_r \min_{\deg(p)<c_r k} \left( \| r(\vec{A})\vec{b} - f(\vec{A})\vec{b} \|_2 + \| f(\vec{A})\vec{b} - p(\vec{A})\vec{b} \|_2 \right)\\
    &\leq C_r \|\vec{b}\|_2\cdot \|r - f\|_{\mathcal I} +  C_r \min_{\deg(p)<c_r k} \| f(\vec{A})b - p(\vec{A}) \vec{b} \|_2.
\end{align*}
Combining, we obtain the bound
\begin{align}
\nonumber
\hspace{5em}&\hspace{-5em}\| f(\vec{A})\vec{b} - \lan{k}{f;\vec{A},\vec{b}} \|_2
\\
&\leq \min_{r} \Big((C_r + 2) \|\vec{b}\|_2 \cdot \|r - f\|_{\mathcal I} 
 + C_r \min_{\deg(p)<c_r k} \| f(\vec{A})\vec{b} - p(\vec{A}) \vec{b} \|_2 \Big).
\end{align}

\section{Comparison to Prior Work}
\subsection{Details of existing near-optimality guarantees for Lanczos-FA}
\label{sec:exist_near_optimal}

In this section, we review in detail the prior analyses of Lanczos-FA cited in \cref{sec:short_existing_guarantees}.
These are the only near-optimality guarantees for Lanczos-FA of which we are aware.
Instance optimality trivially holds when $f(x)$ is a polynomial of degree $<k$. In this case, it is well known that $\lan{k}{f;\vec{A},\vec{b}}$ exactly applies $f(\vec{A})\vec{b}$; i.e., $\| f(\vec{A})\vec{b} - \lan{k}{f;\vec{A},\vec{b}} \|_2 =0$ \cite{saad_92}.
When $f(x) = 1/x$ and $\vec{A}$ is positive definite, the Lanczos-FA algorithm is mathematically equivalent to the well-known conjugate gradient algorithm for solving a system $\vec{A}\vec{x} = \vec{b}$.
This implies that Lanczos-FA is the optimal approximation in the Krylov subspace with respect to the $\vec{A}$-norm; 
that is,
\begin{equation*}
    \lan{k}{1/x;\vec{A},\vec{b}} 
    = \tilde{p}(\vec{A}) \vec{b}
    ,\qquad 
    \tilde{p} :=  \operatornamewithlimits{argmin}_{\deg(p) < k} \| \vec{A}^{-1} \vec{b} - p(\vec{A})\vec{b} \|_{\vec{A}}.
\end{equation*}
This immediately yields near instance optimality in the Euclidean norm:
\begin{equation}
\label{eq:inv_opt}
    \| \vec{A}^{-1}\vec{b} -\lan{k}{1/x;\vec{A},\vec{b}} \|_2 \leq \sqrt{\kappa(\vec{A})} \min_{\deg(p)<k}\| \vec{A}^{-1}\vec{b} - p(\vec{A})\vec{b} \|_2,
\end{equation}
where $\kappa(\vec{A})$ is the condition number $\lmax/\lmin$ of $\vec{A}$. That is, \cref{def:instance_opt} is satisfied with $C = \sqrt{\kappa(\vec A)}$ and $c=1$.

Note that the best polynomial approximation to $1/x$ on $\mathcal{I} = [\lmin,\lmax]$ already converges at a geometric rate \cite{jokar_mehri_05}:
\[
    \min_{\deg(p) < k} \max_{x\in\mathcal{I}} | 1/x - p(x)|
    = \frac{8t^{k+1}}{(t^2-1)^2(\lmax-\lmin)},
    \qquad
    t := 
    1 - \frac{2}{1+\sqrt{\kappa(\vec{A})}}.
\]
That is, \cref{fact:lan_is_FOV} suffices to prove the exponential convergence of the conjugate gradient method. 
However, conjugate gradient (and equivalently Lanczos-FA) often converges even faster, which is explained theoretically by the stronger instance- and spectrum-optimality guarantees that the method satisfies \cite{beckermann_kuijlaars_01,liesen_strakos_13,carson_liesen_strakos_24}. 

If $\vec{A}$ is not positive definite, $\vec{T}$ may have an eigenvalue at or near to zero and the error of the Lanczos-FA approximation to $\vec{A}^{-1}\vec{b}$ can be arbitrarily large. 
In fact, the same is true for any function $f(x)$ which is much larger on $\mathcal{I} = [\lmin,\lmax]$ than on $\Lambda$, the set of eigenvalues of $\vec{A}$.
However, while the Lanczos-FA iterates may be bad at some iterations, the overall convergence of Lanczos-FA with $f(x) = 1/x$ is actually good \cite{chen_meurant_24}.
In particular, the convergence of Lanczos-FA can be related to the MINRES algorithm \cite{cullum_greenbaum_96,meurant_tebbens_20}, which produces an optimal approximation to $\vec{A}^{-1}\vec{b}$ with respect to the $\vec{A}^2$-norm.
This allows certain optimality guarantees for MINRES to be transferred to Lanczos-FA, even on indefinite systems.
In particular, \cite{chen_meurant_24} asserts that for every $k$, there exists $k^* \leq k$ such that\footnote{For indefinite matrices, $\kappa(\vec{A})$ denotes the ratio of the largest eigenvalue magnitude to the smallest eigenvalue magnitude.}
\begin{equation}
    \label{eqn:indefinite_optimal}
    \| \vec{A}^{-1}\vec{b} - \lan{k^*}{1/x;\vec{A},\vec{b}} \|_2
    \leq \kappa(\vec{A}) \sqrt{k+1} \min_{\deg(p)<k} \| \vec{A}^{-1} \vec{b} -p(\vec{A})\vec{b} \|_2.
\end{equation}
While \eqref{eqn:indefinite_optimal} does not quite fit the form of  \cref{def:instance_opt} because of the $k^*$ on the left side, it is similar in spirit. 
Also note that the dependence on $k$ in the prefactor $\sqrt{k+1}$  is not of great significance. 
Indeed, the convergence of the optimal polynomial approximation to $1/x$ on two intervals bounded away from zero is geometric.

Finally, a guarantee for the matrix exponential is proved in \cite[(45)]{druskin_greenbaum_knizhnerman_98}.
They show that the Lanczos-FA iterate satisfies the guarantee
\begin{align*}
    \hspace{6em}&\hspace{-6em}\| \exp(-t \vec{A}) \vec{b}  - \lan{k}{\exp(-t x);\vec{A},\vec{b}} \|_2
    \\
    &\leq 3\| \vec{A}\|_2^2\: t^2
    \max_{0\leq s \leq t}
    \left(
    \min_{\deg(p)<k-2} 
    \| \exp(-s \vec{A}) \vec{b} - p(\vec{A})\vec{b} \|_2 \right).
    %\min_{\vec{x}\in\mathcal{K}_{k-2}} \| \exp(-s \vec{A}) \vec{v} -  \vec{x} \|.
\end{align*}
Again, this bound does not quite fit \cref{def:instance_opt} due to the maximization over $s$. 
The authors of \cite{druskin_greenbaum_knizhnerman_98} state, ``It is not known whether the maximum over $s\in[0,t]$ in (45) can be omitted and $s$ set equal to $t$ in the right-hand side.''

\subsection{Comparison to Lanczos-OR}
\label{sec:lanczos-or}
In \cite{chen_greenbaum_musco_musco_23} an algorithm called the Lanczos method for optimal rational function approximation (Lanczos-OR) was developed.
For rational functions of the form \cref{eqn:rat_form}, when run for $k>\deg(n)$ iterations, Lanczos-OR produces iterates 
\begin{equation*}
    \lanor{k}{r;\vec{A},\vec{b}}
    := \operatornamewithlimits{argmin}_{\vec{x}\in\mathcal{K}_{k-\lfloor q/2 \rfloor}} \| r(\vec{A}) \vec{b} - \vec{x} \|_{|r(\vec{A})|}.
\end{equation*}
This implies a 2-norm instance-optimality guarantee (\cref{def:instance_opt}):
\begin{equation}
\label{eqn:lanczos_OR_2norm_opt}
    \| r(\vec{A})\vec{b} - \lanor{k}{r;\vec{A},\vec{b}} \|_2
    \leq \sqrt{\kappa(r(\vec{A}))} \min_{\deg(p)<k-\lfloor q/2 \rfloor} \| r(\vec{A}) \vec{b} - p(\vec{A})\vec{b} \|_2.
\end{equation}
This guarantee is similar to and often somewhat better than \cref{thm:main}. For example, when $r(x) = {1}/{m(x)}$, $\sqrt{\kappa(r(\vec{A}))} = \sqrt{\kappa(\vec{A}_1) \cdot \kappa(\vec{A}_2) \cdots \kappa(\vec{A}_q)}$. In this case \cref{eqn:lanczos_OR_2norm_opt} improves on \cref{thm:main} by a $q$ factor and a square root. 
However, the bound is for a different algorithm; it does not extend to the ubiquitous Lanczos-FA method. 

Moreover, despite this bound, Lanczos-OR usually performs worse than Lanczos-FA in practice if error is measured in the 2-norm. It is not hard to find examples where \cref{eqn:lanczos_OR_2norm_opt} is essentially sharp; see for instance \cref{fig:lanczos_or}.
On the other hand, we have been unable to find any numerical examples where \cref{thm:main} is sharp, suggesting that it may be possible to prove a tighter bound for Lanczos-FA.

\section{Instance, spectrum, and FOV optimality}
\label{sec:opt_minimax}

The convergence of Lanczos-FA is entirely determined by spectral properties of $\vec{A}$. 
Therefore, following the classical analysis of $f(x) = 1/x$, we can relax the notion of instance optimality (\cref{def:instance_opt}) by removing its dependence on $\vec b$:

\begin{definition}[Near Spectrum Optimality]\label{def:spectrum_opt}
For an input instance $(f,\vec{A},\vec{b},k)$, we say that a Krylov method is nearly spectrum optimal with parameters $C,c$ if
\[
    \| f(\vec{A})\vec{b} - \textsf{\textup{alg}}_{k}(f;\vec{A},\vec{b})
    \|_2
    \leq C \|\vec b\|_2 \min_{\deg(p)< ck} \|f(\vec{A}) - p(\vec{A})\|_2.
    %= C \|\vec b\|_2 \min_{\deg(p)< ck} \left( \max_{x\in\Lambda} |f(x) - p(x)| \right).
\]
\end{definition}
Note that \cref{def:spectrum_opt} depends only on the spectrum of $\vec{A}$, and not on the interaction of the eigenvectors of $\vec{A}$ with the starting vector $\vec{b}$. Hence, we call it ``spectrum optimality''. Note also that the guarantee of \cref{thm:inv_sqrt} fits this form.
We can further relax this notion of optimality by weakening its dependence on $\vec A$:
\begin{definition}[Near FOV Optimality]\label{def:FOV_opt}
For an input instance $(f,\vec{A},\vec{b},k)$, we say that a Krylov  method is nearly FOV optimal with parameters $C,c$ if
\[
    \| f(\vec{A})\vec{b} - \textsf{\textup{alg}}_{k}(f;\vec{A},\vec{b})
    \|_2
    \leq C \|\vec b\|_2 \min_{\deg(p)< ck} \left( \max_{x\in[\lmin,\lmax]} |f(x) - p(x)| \right).
\]
\end{definition}
Note that \cref{fact:lan_is_FOV} fits this form.
Each type of optimality tightly relaxes the previous one. \cref{def:instance_opt} implies \cref{def:spectrum_opt} and, as we show in \cref{thm:sharps},  for any $f, k$ and $\vec A$, there exists a ``worst-case'' choice of $\vec b$ for which their bounds coincide; likewise, \cref{def:spectrum_opt} implies \cref{def:FOV_opt}, and for any $f(x)$ and $k$ there exists a ``worst-case'' choice of $\vec A$ and $\vec b$ for which their bounds coincide.

\begin{theorem}~
\label{thm:sharps}
\begin{enumerate}
\item Given any $d$, discrete set $\Lambda$ of at most $d$ real points, $f(x)$, and $k<d$, there exists a symmetric matrix $\vec{A} \in \R^{d\times d}$ and vector $\vec{b} \in \R^d$ such that the spectrum of $\vec{A}$ is $\Lambda$ and
    \[
    \min_{\deg(p)<k} \| f(\vec{A}) \vec{b} - p(\vec{A}) \vec{b} \|_2/{\|\vec{b}\|_2}
    =
    \min_{\deg(p)<k} \max_{x\in \Lambda} |f(x) - p(x)|.
\] \label{thm:sharp_discrete}
\item Given any $d$, $\lmin$, $\lmax$, $f(x)$ continuous on $[\lmin,\lmax]$, and $k<d$, there exists a symmetric matrix $\vec{A} \in \R^{d \times d}$ and vector $\vec{b} \in \R^d$ such that the smallest eigenvalue of $\vec{A}$ is $\lmin$, the largest eigenvalue of $\vec{A}$ is $\lmax$, and
    \[
    \min_{\deg(p)<k} \| f(\vec{A}) \vec{b} - p(\vec{A}) \vec{b} \|_2/{\|\vec{b}\|_2}
    =
    \min_{\deg(p)<k} \max_{x\in\mathcal{I}} |f(x) - p(x)|.
\] \label{thm:sharp_cont}
\end{enumerate}
\end{theorem}
Our proof follows the approach of \cite[Theorem 1]{greenbaum_79} closely.

\begin{proof}
For \cref{thm:sharp_discrete}, note that if $|\Lambda|\leq k$, then both sides of the expression are zero. Thus, it suffices to consider the case $|\Lambda| \geq k+1$.
On a discrete set $\Lambda$ with at least $k+1$ distinct points, $f(x)$ has a best (on $\Lambda$) polynomial approximation $p^*$ of degree $k-1$ that equioscillates $f(x)$ at $k+1$ points \cite[Theorem 1.11]{rivlin_81}. That is, there exist values $\lambda_1 < \cdots < \lambda_{k+1}$ contained in $\Lambda$ such that 
\begin{equation*}
    f(\lambda_i) - p^*(\lambda_i) = (-1)^{i-1} \epsilon
    ,\qquad i=1,\ldots, k+1,
\end{equation*}
where 
\begin{equation*}
    \epsilon := \max_{x\in \Lambda} |f(x) - p^*(x)|.
\end{equation*}
Similarly, for \cref{thm:sharp_cont} if $f(x)$ is continuous on $\mathcal{I} = [\lmin,\lmax]$, then $f(x)$ has a unique best polynomial approximation $p^*$ of degree $k-1$ which equioscillates at $k+1$ points $\lambda_1<  \cdots < \lambda_{k+1}$ in $\mathcal{I}$ (see for instance \cite{rivlin_81,trefethen_19}), and we analogously define $\epsilon$ as
\begin{equation*}
    \epsilon := \max_{x\in \mathcal I} |f(x) - p^*(x)|.
\end{equation*}

Set 
\[
\vec{A} = \operatorname{diag}(\lambda_1, \ldots, \lambda_{k+1},\underbrace{\lambda_{k+1}, \ldots, \lambda_{k+1}}_{d-k-1~\text{times}})
,\quad
\vec{b} = [b_1, \ldots, b_{k+1},\underbrace{0,\ldots, 0}_{\mathclap{d-k-1~\text{times}}}]^\T.
\]
Then,
\[
    \min_{\deg(p)<k} \| f(\vec{A}) \vec{b} - p(\vec{A}) \vec{b} \|_2^2
    = \min_{\alpha_0, \ldots, \alpha_{k-1}} 
    \sum_{i=1}^{k+1} b_i^2 \big(f(\lambda_i) - p(\lambda_i) \big)^2
    ,\qquad p(\lambda):= \sum_{j=0}^{k-1} \alpha_j \lambda^j.
\]
We would like to find $\vec{b}$ so that $f(\lambda_i) - p(\lambda_i) = (-1)^{i-1} \epsilon$; i.e. so that $p = p^*$.
When $\vec{b}$ is fixed, for each $j=0,1,\ldots, k-1$ the solution to this least squares problem must satisfy
\begin{equation*}
    0 = 
    \frac{\mathrm{d}}{\mathrm{d}\alpha_j} \sum_{i=1}^{k+1} b_i^2 \big(f(\lambda_i) - p(\lambda_i) \big)^2
    =  -2 \sum_{i=1}^{k+1} b_i^2 \left(f(\lambda_i) - p(\lambda_i) \right) \lambda_i^j
    ,\qquad j=0,1,\ldots, k-1.
\end{equation*}
When $p = p^*$, this gives the conditions
\begin{equation*}
    0 = -2 \sum_{i=1}^{k+1} b_i^2 (-1)^{i-1} \epsilon \lambda_i^j 
        ,\qquad j=0,1,\ldots, k-1.
\end{equation*}
Without loss of generality, we can assume $\|\vec{b}\|_2 = 1$ so that $b_1^2 + \cdots + b_{k+1}^2 = 1$.
Thus, we obtain a linear system 
\[
    \begin{bmatrix}
        1 & 1 & \cdots & 1 \\ 
        -2 \epsilon\lambda_1^0 & 2\epsilon\lambda_2^0 & \cdots & 2(-1)^{k+1}\lambda_{k+1}^0 \epsilon \\
%        -2\epsilon\lambda_1 & 2\epsilon\lambda_2 & \cdots & 2(-1)^{k+1} \epsilon\lambda_{k+1} \\
        \vdots & \vdots & & \vdots \\ 
        -2\epsilon\lambda_1^{k-1} & 2\epsilon\lambda_2^{k-1} & \cdots & 2(-1)^{k+1} \epsilon\lambda_{k+1}^{k-1} \\
    \end{bmatrix}
    \begin{bmatrix}
        b_1^2 \\ b_2^2 \\ \vdots \\ b_{k+1}^2
    \end{bmatrix}
    =
    \begin{bmatrix}
        1 \\ 0 \\ \vdots \\ 0
    \end{bmatrix}.
\]
We can rewrite this system as 
\[
    \begin{bmatrix}
        1 & -1 & \cdots & (-1)^k \\ 
        \lambda_1^0 & \lambda_2^0 & \cdots & \lambda_{k+1}^0 \\ 
%        \lambda_1 & \lambda_2 & \cdots & \lambda_{k+1} \\
        \vdots & \vdots & & \vdots \\ 
        \lambda_1^{k-1} & \lambda_2^{k-1} & \cdots & \lambda_{k+1}^{k-1} \\
    \end{bmatrix}
    \begin{bmatrix}
        b_1^2 \\ -b_2^2 \\ \vdots \\ (-1)^kb_{k+1}^2
    \end{bmatrix}
    =
    \begin{bmatrix}
        1 \\  0 \\ \vdots \\ 0
    \end{bmatrix}.
\]This system can be solved analytically via Cramer's rule and has solution
\[
b_{\ell}^2 = \Bigg(
     \prod_{\substack{i=1\\i\neq \ell}}^{k+1}\prod_{\substack{j=i+1\\j\neq \ell}}^{k+1}(\lambda_j - \lambda_i) \Bigg) 
     \Bigg/ 
     \Bigg( 
    \sum_{m=1}^{k+1} \prod_{\substack{i=1\\i\neq m}}^{k+1}\prod_{\substack{j=i+1\\j\neq m}}^{k+1}(\lambda_j - \lambda_i) \Bigg).
\]
Clearly $(\lambda_j - \lambda_i)$ is positive for $j \in \{i+1, \ldots k+1\}$. Thus the numerator and denominator above are positive, so the entries of $\vec{b}$ are well-defined and real.
This implies that, for these choices of $\vec{A}$ and $\vec{b}$,
\[
    \min_{\deg(p)<k} \| f(\vec{A}) \vec{b} - p(\vec{A}) \vec{b} \|_2^2
    = \min_{\alpha_0, \ldots, \alpha_{k-1}} 
    \sum_{i=1}^{k+1} b_i^2 \big(f(\lambda_i) - p(\lambda_i) \big)^2
    = \sum_{i=1}^{k+1} b_i^2 \epsilon^2
    = \epsilon^2.
\]
Taking the square root of both sides gives the result.
\end{proof}

\subsection{Relation between spectrum and instance optimality for random vectors}
\label{sec:proof_spectrum2instance}

We also note that, with some additional assumptions on $\vec b$, spectrum optimality implies instance optimality:
\begin{lemma}\label{lem:spectrum2instance}
Let $\{\vec{u}_1, \ldots, \vec{u}_d\}$ be the eigenvectors of $\vec A$. Given a problem instance $(f, \vec A, \vec b, k)$, if an algorithm is nearly spectrum optimal with parameters $C$ and $c$, then it is nearly instance optimal with parameters $C \cdot \frac{\|\vec b\|_2}{\min_j |\vec{u}_j^\T \vec b|}$ and $c$.
\end{lemma}
The bound is useful e.g., when $\vec b$ is a random vector. 
For example, when $\vec b$ has independent and identically distributed Gaussian entries, $\frac{\|\vec b\|_2}{ \min_j |\vec{u}_j^\T \vec b|} = O(d^{3/2})$  with high probability.
Such random vectors occur when sampling Gaussians \cite{pleiss_jankowiak_eriksson_damle_gardner_20} and are extremely common in machine learning applications related to trace and spectrum approximation \cite{papyan_19,ghorbani_krishnan_xiao_19,yao_gholami_keutzer_mahoney_20,braverman_krishnan_musco_22,chen_trogdon_ubaru_21,chen_trogdon_ubaru_22}.

%As shown above, $\opt{k}{f;\vec{A},\vec{b}}$ is instance, spectrum, and FOV optimal with parameters $C = c = 1$ for every problem instance.

Let $\vec V$ be the matrix whose $i$th column is $\vec v_i$. Let $w_i = \vec v_i^\T \vec b$ and $\vec w = \vec V^\T \vec b$ be the vector whose $i$th entry is $w_i$. 
Then,
\begin{align*}
    \| f(\vec{A})\vec{b} - p(\vec{A})\vec{b} \|_2^2
    = \sum_{i=1}^{d} (f(\lambda_i) - p(\lambda_i))^2 w_i^2
    \geq \left( \min_{j} w_j^2 \right)\sum_{i=1}^{d} (f(\lambda_i) - p(\lambda_i))^2.
\end{align*}
Clearly $\sum_{i=1}^{d} (f(\lambda_i) - p(\lambda_i))^2\geq \max_{x\in\Lambda} |f(x) - p(x)|^2$. 
Taking the square root of both sides and substituting into \cref{def:spectrum_opt} proves the result.

For completeness, we now show that if $\vec b \sim \mathcal N(\vec{0}, \vec{I}_d)$, then with high probability,
\[ 
\frac{\|\vec b\|_2}{\min_j |\vec v_j^\T \vec b|} = O(d^{3/2}).
\]
First, with high probability, $\|\vec b\|_2 = \Theta(\sqrt{d})$; see, for example, \cite[3.1.2]{vershynin_2018}. Second, we show that with high probability $|\vec v_j^\T \vec b| = \Omega(1/d)$ for all $j$. Since $\vec{V}$ is orthonormal, $\vec w := \vec V^\T \vec b \sim \mathcal N(\vec{0}, \vec{I}_d)$. By anti-concentration of the normal distribution, the probability that $|w_j| > \epsilon$ is at least $1 - 0.4\epsilon$ for any $\epsilon$.
By a union bound, this holds simultaneously for all $j$ with probability at least $1 - 0.4\epsilon d$. Setting $\epsilon = \Theta(1/d)$ finishes the argument. Finally, by another union bound, our bounds on the numerator and on the denominator hold simultaneously with high probability.

\section{Proofs of \texorpdfstring{\cref{thm:inv_sqrt}}{Theorem 6} and \texorpdfstring{\cref{thm:sqrt}}{Theorem 7}}
\label{sec:sqrt proofs}
We begin by quoting bounds from \cite{chen_greenbaum_musco_musco_22}:
\begin{lemma} 
For all $k\geq 1$, the Lanczos-FA iterate satisfies the bounds
\label{lem:cauchy}
\begin{enumerate}
    \item \(\displaystyle
        \| \vec A^{1/2}\vec b - \lan{k}{x^{1/2};\vec{A},\vec{b}}\|_2 \leq \frac{\lmax^{3/2}}{2k^{3/2}} \cdot \|\vec A^{-1}\vec b - \lan{k}{1/x;\vec{A},\vec{b}}\|_2. \) \label[part]{lem:cauchy:sqrt}
    \item \(\displaystyle
        \|\vec A^{-1/2}\vec b - \lan{k}{x^{-1/2};\vec{A},\vec{b}}\|_2 \leq \sqrt{\frac{\lmax}{\pi k}} \cdot \|\vec A^{-1}\vec b - \lan{k}{1/x;\vec{A},\vec{b}}\|_2.\)
        \label[part]{lem:cauchy:invsqrt}
\end{enumerate}
\end{lemma}

\begin{proof}
For \cref{lem:cauchy:sqrt}, see Example 4.1 in \cite{chen_greenbaum_musco_musco_22}. For part \cref{lem:cauchy:invsqrt}, we use the same proof again but substituting the inverse square root for the square root. For both parts, we have simplified the resulting bounds using the fact that for $k \geq 1$,
\begin{align*}
    \frac{\Gamma(k-1/2)}{\Gamma(k+1)} \leq \frac{\sqrt \pi}{k^{3/2}}
    \qquad \qquad \frac{\Gamma(k+1/2)}{\Gamma(k+1)} \leq \frac1{\sqrt{k}}.
\end{align*}
\end{proof}

Next, we bound the error of Lanczos-FA on linear systems by the error of the optimal polynomial approximation to $x^{-1/2}$:
\begin{lemma}
\label{lem:reverse_cauchy}
For all $k\geq 1$, the Lanczos-FA iterate satisfies the bounds
\[
    \| \vec{A}^{-1}\vec{b} - \lan{k}{1/x;\vec{A},\vec{b}} \|_2
    \leq \frac3{\sqrt{\lmin}} \sqrt{\kappa(\vec{A})} \|\vec b\|_2 \min_{\mathrm{deg}(p) < k/2} \left(\max_{x \in \Lambda}\left|\frac1{\sqrt{x}} - p(x)\right|\right).
\]
\end{lemma}

\begin{proof}
Let
\[ p^*(x) = \operatornamewithlimits{argmin}_{\mathrm{deg}(p) < k/2} \max_{x \in \Lambda} \left|\frac1{\sqrt{x}} - p(x)\right|\]
and note that $p(x)^2$ is a polynomial of degree less than $k$.
Therefore,
\[
    \min_{\deg(p)<k}\| \vec{A}^{-1}\vec{b} - p(\vec{A})\vec{b} \|_2 
    \leq \|\vec b\|_2 \min_{\deg p < k} \max_{x \in \Lambda} \left|\frac1x - p(x)\right|
    \leq 
    \|\vec b\|_2 \max_{x \in \Lambda} \left|\frac1x - p^*(x)^2\right|.
\]
Thus, since $|1/x - p^*(x)^2| = |1/\sqrt{x} + p^*(x)|\cdot|1/\sqrt{x} - p^*(x)|$, we can plug the above into \cref{eq:inv_opt} to obtain a bound
\begin{align*}
    \| \vec{A}^{-1}\vec{b} - &\lan{k}{1/x} \|_2
    \leq \sqrt{\kappa(\vec{A})} \|\vec b\|_2 \left(\max_{x \in \Lambda} \left|\frac1{\sqrt{x}} + p^*(x)\right|\right)\left(\max_{x \in \Lambda} \left|\frac1{\sqrt{x}} - p^*(x)\right|\right).
\end{align*}
Now use can use the optimality of $p^*(x)$in approximating $1/\sqrt{x}$ on $\Lambda$ to bound:
\begin{align*}
\max_{x \in \Lambda} \left|\frac1{\sqrt{x}} + p^*(x)\right|
&= \max_{x \in \Lambda} \left|\frac2{\sqrt{x}} + p^*(x) - \frac1{\sqrt{x}}\right|\\
&\leq \max_{x \in \Lambda} \left|\frac2{\sqrt{x}}\right| + \max_{x \in \Lambda} \left|p^*(x) - \frac1{\sqrt{x}}\right| \\
% &=: \max_{x \in \Lambda} \left|\frac2{\sqrt{x}}\right| + \min_{\mathrm{deg}(p)\leq k/2} \max_{x \in \Lambda} \left|p(x) - \frac1{\sqrt{x}}\right| \\
&\leq \frac2{\sqrt{\lmin}} + \max_{x \in \Lambda} \left|0(x) - \frac1{\sqrt{x}}\right|
\leq \frac3{\sqrt{\lmin}}.
\end{align*}
Substituting this inequality above proves the lemma.
\end{proof}
The main proofs now follow directly.

\begin{proof}[Proof of \cref{thm:inv_sqrt}]
Substitute \cref{lem:reverse_cauchy} into \cref{lem:cauchy}, part (b).
\end{proof}

\begin{proof}[Proof of \cref{thm:sqrt}]
Let
\begin{align*}
    p^*(x) = \operatornamewithlimits{argmin}_{\mathrm{deg}(p) < k/2 + 1} \max_{x \in \Lambda \cup \{0\}} \left|\sqrt{x} - p(x)\right|,
    \qquad
    \tilde p(x) =  \frac{p^*(x) - p^*(0)}x
\end{align*}
and note that $\tilde p$ is a polynomial of degree less than $k/2$. Then
\begin{align*}
\min_{\mathrm{deg}(p) < k/2} \left(\max_{x \in \Lambda}\left|\frac1{\sqrt{x}} - p(x)\right|\right)
&\leq \max_{x \in \Lambda} \left|\frac1{\sqrt{x}} - \tilde p(x)\right| \\
&= \max_{x \in \Lambda} \left(\left|1/x\right| \cdot \left|\sqrt{x} - x \tilde p(x)\right| \right) \\
&\leq \frac1{\lmin} \max_{x \in \Lambda}\left|\sqrt{x} - p^*(x) + p^*(0)\right| \\
&\leq \frac1{\lmin} \max_{x \in \Lambda}\left(\left|\sqrt{x} - p^*(x)\right| + \left|\sqrt{0} - p^*(0)\right|\right) \\
&\leq \frac2{\lmin} \max_{x \in \Lambda \cup \{0\}} \left|\sqrt{x} - p^*(x)\right|.
% &= \frac2{\lmin} \min_{\mathrm{deg}(p) < k/2 + 1} \left(\max_{x \in \Lambda \cup \{0\}} \left|\sqrt{x} - p(x)\right|\right).
\end{align*}
Combining \cref{lem:cauchy} part (a), \cref{lem:reverse_cauchy}, and the above proves the theorem. 
\end{proof}

\section{Additional Experiments}
\label{sec:additional_experiments}

\subsection{Validating \texorpdfstring{\cref{thm:inv_sqrt,thm:sqrt}}{Theorems 6 and 7}}
\label{sec:exp_thm2}
To understand \cref{thm:inv_sqrt,thm:sqrt}, we repeat the experiment using the inverse square root and square root functions. We reuse the second and third $\vec{A}$ matrix and $\vec{b}$ vectors from the previous experiment. As \cref{fig:inv_sqrt_experiment} shows, our bounds are tighter than \cref{fact:lan_is_FOV}. They closely resemble the true convergence of Lanczos-FA, but stretched horizontally by a factor of 2. This is because the degree of the polynomial minimization in both bounds is $k/2$, while in these examples Lanczos-FA performs nearly instance optimally (that is, as well as the best degree $k$ polynomial). Still, our bounds capture the correct qualitative behavior of the algorithm.
\begin{figure}
    \centering
    \includesvg[scale=0.6]{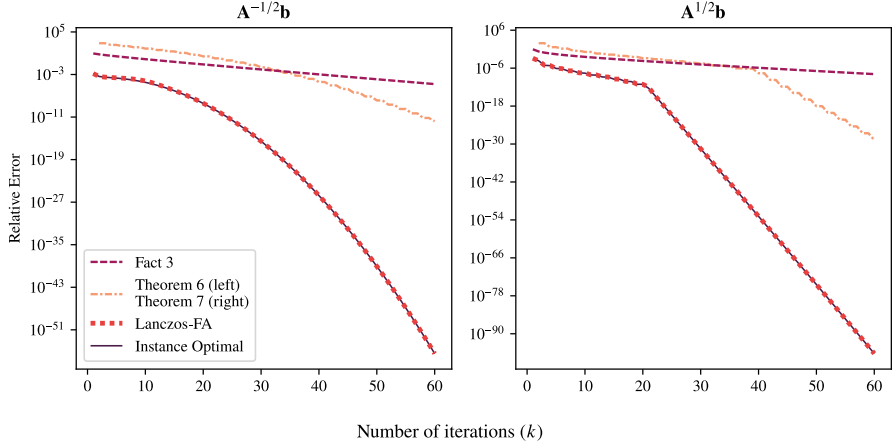}
    \caption{The bounds of \cref{thm:inv_sqrt,thm:sqrt} capture the convergence behavior of Lanczos-FA for $\vec A^{\pm 1/2} \vec b$. In particular, they can be tighter than the  FOV optimality  bound of \cref{fact:lan_is_FOV}. The predicted rate of convergence is about half that observed for Lanczos-FA in this example; this is due to the parameter $c=1/2$ in both bounds.}
    \label{fig:inv_sqrt_experiment}
\end{figure}

\subsection{Rational functions with poles between eigenvalues}
\label{sec:poles_inside}

In \cref{thm:main}, we assume that the poles of the rational function $r(x)$ lie outside the interval $\mathcal{I} = [\lmin,\lmax]$ containing $\vec{A}$'s eigenvalues.
If the poles of $r(x)$ live in $\mathcal{I}$, then the Lanczos-FA iterate can be arbitrarily far from the optimal iterate. Indeed, an eigenvalue of $\vec{T}$ might be very close to a pole, causing $f(\vec{T})$ to be poorly behaved. This behavior is easily observed e.g. for $f(\vec{A}) = \vec{A}^{-1}$ and rules out a direct analog of \cref{thm:main} when $r(x)$ has poles in $\mathcal{I}$.
However, as noted in \cref{eqn:indefinite_optimal}, for $1/x$ there exists bounds for Lanczos-FA in terms of the best possible KSM \cite{chen_meurant_24}.

However, as discussed in \cref{sec:exist_near_optimal}, the ``overall'' convergence of Lanczos-FA still seems to closely follow the optimal approximation for many matrix functions. Specifically, even if the Lanczos-FA iterate deviates significantly from optimal on some iterations, there are other iterations where it is very close to optimal. We illustrate this phenomena in \cref{fig:indefinite} for two rational functions with poles in $\mathcal{I} = [\lmin,\lmax]$, as well as the function $\sign(x)$, which  has a discontinuity in $\mathcal{I}$ and would typically be approximated by a rational function with conjugate pairs of imaginary poles on the imaginary axis which become increasingly close to the the interval as the rational function degree increases.\footnote{We remark that \cite{chen_greenbaum_musco_musco_23} shows that applying their Lanczos-OR method to each term in the partial fraction decomposition of the approximating rational function yields an approximation to the sign function that seems to avoid the oscillations of Lanczos-FA.}

\subsection{Comparison with Lanczos-OR}
\label{sec:lanzcos_or_experiments}

As discussed in \cref{sec:lanczos-or}, the Lanczos-OR method is an alternative method for rational matrix function approximation for which a tighter guarantee than \cref{thm:main} is currently known (the iterates are optimal in a certain norm). 
However, Lanczos-FA usually outperforms Lanczos-OR when measured in the 2-norm. 
For example, as discussed in \cref{sec:rational_degree}, when approximating $\vec{A}^{-q}\vec{b}$, the worst optimality ratio of Lanczos-FA that we were able to observe was $O(\sqrt{q \cdot \kappa(\vec{A})})$, much lower than \cref{thm:main} would suggest). 
Repeating the same experiment with Lanczos-OR shows that the method's optimality ratio appears to grow as $\Omega(\kappa(\vec{A})^{q/2})$ (see \cref{fig:lanczos_or}).

\begin{figure}
    \centering
    \includesvg[scale=0.6]{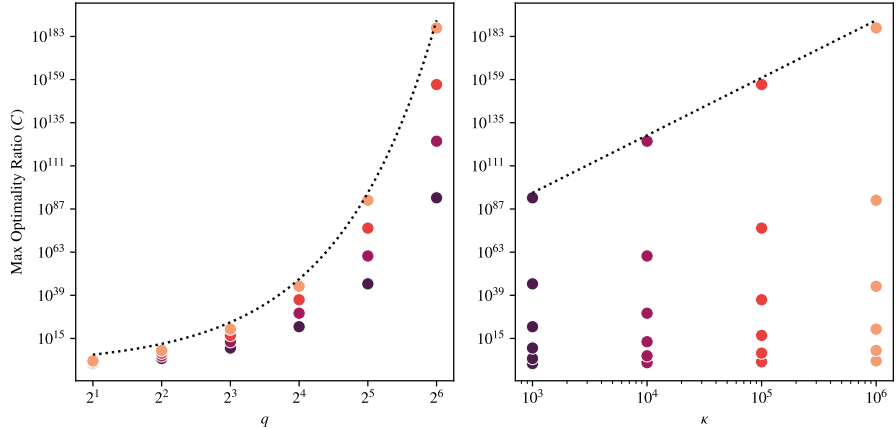}
    \caption{The maximum observed ratio between the error of Lanczos-OR and the optimal error over choice of right hand side $\vec{b}$ when approximating $\vec{A}^{-q}$ for matrices with varying condition number $\kappa$. Each point shows the optimality ratio for a different pair of $\kappa$ and $q$. Points with the same color correspond to the same value of $\kappa$. On the left, the dotted line plots $g(q) = \kappa^{q/2}$ for the maximum $\kappa$ considered ($10^6$). On the right, the dotted line plots $g(\kappa) = \kappa^{q/2}$ for the maximum $q$ considered ($2^6 = 64$). Overall, the optimality ratio appears to grow as $\Omega(\kappa^{q/2})$. Contrast with \cref{fig:opt_lower_bound}.}
    \label{fig:lanczos_or}
\end{figure}

\end{document}